\definecolor{MyLinkColor}{rgb}{0,0,0.4}
\newcommand{\R}{{\mathbb R}}
\newcommand{\bX}{\mathbb{X}}
\newcommand{\bY}{\mathbb{Y}}
\newcommand{\bZ}{\mathbb{Z}}
\newcommand{\N}{{\mathbb N}}
\newcommand{\kH}{\mathcal{H}}
\newcommand{\cO}{\mathcal{O}}
\newcommand{\cF}{\mathcal{F}}
\newcommand{\cH}{\mathcal{H}}
\newcommand{\kL}{\mathcal{L}}
\newcommand{\wt}{\widetilde}
\newcommand{\ov}{\overline}
\newcommand{\p}{\partial}
\newcommand{\e}{\varepsilon}
\newcommand{\0}{\Omega}
\newcommand{\G}{\Gamma}
\newcommand{\tr}{\mathop{\rm tr}\nolimits}
\newcommand{\ima}{\operatorname{im}}
\newtheorem{thm}{Theorem}[section]
\newtheorem{prop}[thm]{Proposition}
\newtheorem{lemma}[thm]{Lemma}
\theoremstyle{remark} 
\newtheorem{rem}[thm]{Remark}
\numberwithin{equation}{section} 
\begin{document}

\title[Title]{Steady periodic hydroelastic waves in polar regions}
 
\thanks{}

\author{Bogdan-Vasile Matioc}
\address{Fakult\"at f\"ur Mathematik, Universit\"at Regensburg \\ D--93040 Regensburg, Deutschland}
\email{bogdan.matioc@ur.de}

\author{Emilian I. P\u ar\u au}
\address{School of Mathematics, University of East Anglia, Norwich NR4 7TJ, UK}
\email{e.parau@uea.ac.uk}

\begin{abstract}
We construct two-dimensional   steady periodic hydroelastic waves with vorticity that propagate on water of finite depth under a deformable floating elastic plate 
 which is modeled by using the special Cosserat theory of hyperelastic shells satisfying Kirchhoff's hypothesis.
This is achieved by providing necessary and sufficient condition for local bifurcation from the trivial branch of laminar flow solutions.
\end{abstract}

\subjclass[2020]{76B15; 74F10; 35B32}
\keywords{Hydroelastic waves; Local bifurcation; Rotational waves}

\maketitle

\pagestyle{myheadings}
\markboth{\sc{E. P\u ar\u au  \& B.-V.~Matioc}}{\sc{Steady periodic hydroelastic waves in polar regions}}

 \section{Introduction}\label{Sec:1}
Hydroelastic waves propagate in polar regions at the surface of  water   covered by a deformable ice sheet.
The water is modeled as an inviscid and incompressible fluid with constant  density (set to be $1 $). 
Assuming that   the  flow is two-dimensional, the equations of motion are the Euler equations
 \begin{subequations}\label{Euler}
\begin{equation}\label{E:1} 
\left.
\arraycolsep=1.4pt
\begin{array}{rllllll}
u_t+   u u_x+  vu_y&=&-P_x, \\[1ex]
v_t+  uv_x+ vv_y&=&-P_y-g, \\[1ex]
u_x+v_y&=&0,
\end{array}
\right\}\qquad -d<y<\eta(t,x),
\end{equation}
where  $u$ is the horizontal velocity, $v$ is the vertical velocity, $P$ is the pressure, and $g$ is the gravitational acceleration. 
The fluid domain is bounded from below by a flat impermeable bed located at $y=-d$, where $d>0$ is a constant,
 and   the  free wave surface $\{y=\eta(t,x)\}$  is assumed to be a thin ice sheet which is modeled as  a thin elastic plate by using the  Cosserat theory of hyperelastic shells satisfying Kirchhoff's hypothesis \cite{PT11, GP12}.
  The inertia of this thin elastic plate is neglected, we assume that the plate is not  pre-stressed, and consider only the effect of bending, neglecting the stretching of the plate.
Therefore  we  impose the following boundary conditions
\begin{equation} \label{E:2}
\left.
\arraycolsep=1.4pt
\begin{array}{rlllllll}
P&=&\alpha H(\eta)& \quad \text{on $y=\eta(t,x),$}\\[1ex]
v&=&\eta_t +u\eta_x&\quad \text{on $y=\eta(t,x),$}\\[1ex]
v&=&0&\quad\text{on $ y=-d,$}
\end{array}
\right\}
\end{equation}
where $\alpha>0$ is a constant, 
\[
H(\eta):=\omega(\eta)^{-1}\big[\omega(\eta)^{-1}\big(\omega(\eta)^{-3}\eta_{xx}\big)_{x}\big]_x+\frac{1}{2}\big(\omega(\eta)^{-3}\eta_{xx}\big)^3,
\]
and 
\begin{equation} \label{E:3}
\omega(\eta):=(1+\eta_x^2)^{1/2}.
\end{equation}  

 We investigate herein the existence of periodic steady water wave solutions to \eqref{E:1}-\eqref{E:3} for which  the unknowns $u,\, v,\, P,\, \eta$ satisfy 
 \[
 (u,v,P)(t,x,y)= (u,v,P)(x-ct,y) \qquad\text{and}\qquad \eta(t,x)= \eta(x-ct),
 \]
 where $c>0$ is the   speed of the wave. 
Moreover, letting $\lambda>0$ denote the period of the wave, we  restrict to solutions  which fulfill
\begin{equation} \label{E:4}
\int_0^\lambda \eta(x)\, {\rm d}x=0.
\end{equation}
 \end{subequations}
 
 Setting 
 \[
\0_\eta:=\{(x,y)\in\R^2\,:\, -d<y<\eta(x)\},
 \]
we thus look for functions $u,\, v,\, P:\overline{\0_\eta}\to\R$ and $\eta:\R\to\R$  which are $\lambda$-periodic in $x$  and  solve (after replacing $u-c$ by $u$) the coupled system of equations
\begin{subequations}\label{For1}
\begin{equation}\label{P1} 
\left.
\arraycolsep=1.4pt
\begin{array}{rllllll}
 u u_x+  vu_y&=&-P_x&\quad\text{in $\0_\eta$}, \\[1ex]
uv_x+ vv_y&=&-P_y-g&\quad\text{in $\0_\eta$}, \\[1ex]
u_x+v_y&=&0&\quad\text{in $\0_\eta$},\\[1ex]
P&=&\alpha H(\eta)&\quad\text{on $y=\eta(x),$}\\[1ex]
v&=&u\eta'&\quad\text{on $y=\eta(x),$}\\[1ex]
v&=&0&\quad\text{on $ y=-d,$}\\
\displaystyle\int_0^\lambda \eta(x)\, {\rm d}x&=&0.
\end{array}
\right\}
\end{equation}
We also exclude the presence of stagnation pointy by requiring that 
\begin{equation}\label{For1'}
u<0\quad\text{in $\ov{\0_\eta}$}.
\end{equation}
\end{subequations}

A similar setting has been considered in \cite{GHW16} where,  using a variational approach, the authors establish the existence of hydroelastic solitary waves
  for sufficiently large values of the dimensionless parameter $\alpha$  under the assumption that the vorticity is zero.
 Within the same irrotational scenario, the authors of \cite{AG23} establish the existence of  symmetric envelope hydroelastic solitary waves 
   by using spatial dynamics techniques.
  Moreover, in \cite{AAS16, AAS19},  the existence of periodic hydroelastic waves  which may posses a  multi-valued height between  two  superposed irrotational fluid layers with positive densities  separated by an elastic plate
  was shown via global bifurcation theorem, the analysis being based on the reformulation of  the problem as a vortex sheet problem.

We also mention the paper \cite{BT11} where, in the rotational setting,  weak periodic solutions to a related problem which acconts also for  surface tension effects  at the free boundary  
 are constructed via a variational approach, see also  \cite{T08, BaT11} for hydroelastic wave models which allow  for both bending and stretching of the elastic  wave surface.

The initial-value problem for flexural-gravity waves has been investigated in \cite{AS17} by developing
a well-posedness theory based on a vortex sheet formulation.

For numerical studies of hydroelastic waves, in the setting of constant vorticity, we refer to the recent works  \cite{GWM19, WV20, GMV19}.

In the present paper we extend the existence theory for~\eqref{For1} by allowing for a general vorticity~$$\overline\omega:=u_y-v_x.$$ 
The vorticity is a very important aspect of ocean flows also in polar regions, a non-zero vorticity characterizing  waves that interact  with non-uniform currents such as  the  Antarctic Circumpolar Current 
or near-surface currents in the Arctic Ocean,
 see e.g.~\cite{AC23, CJ23, Vallis_2017, MQ22}.
An essential tool in our  analysis is the availability of two equivalent formulations of \eqref{For1}, the stream function formulation \eqref{FOR2}    and the height function formulation~\eqref{FOR3}, 
see~Proposition~\ref{P:Ep}.
In particular,  the condition~\eqref{For1'} enables us to introduce the so-called vorticity function~${\gamma:[p_0,0]\to\R}$, where $p_0<0$ is a constant, which 
determines, via the stream function formulation~\eqref{FOR2}, the vorticity~$\overline\omega$ of the flow, see Section~\ref{Sec:2}.
While we consider a general H\"older continuous vorticity function $\gamma$, in order to establish the existence (and uniqueness) of a laminar flow solution to \eqref{For1} 
(with a flat  wave profile located at~${y=0}$ and $x$-independent velocity and pressure) 
the  restriction~\eqref{COND1Thm} is required on~$\gamma$ and the physical parameters. 
This laminar flow solution is a solution to \eqref{For1} for each value of the wavelength  $\lambda$.
We will  then use $\lambda$ as a bifurcation parameter in order to determine other nonlaminar symmetric (with respect to the horizontal line $x=0$) hydroelastic waves.
It turns out that bifurcation can occur if and only if   a second   condition, see \eqref{COND2Thm}, is satisfied.
In the setting of irrotational waves these conditions are explicit, see Remark~\ref{R:2}.
In order to prove our main result in Theorem~\ref{MT1}, we cannot directly use the aforementioned formulations \eqref{FOR2} or \eqref{FOR3} of the problem because
 the   boundary condition  in these formulations that corresponds to the dynamic boundary condition \eqref{P1}$_4$  involves fourth order derivatives of the unknown,
  whereas the elliptic equation posed in the (fluid) domain is of second order.
 However, inspired by an idea used also in other  steady water wave problems, see \cite{HM14b, M14, MM14, ASW16, CM14b, WW23, EKLM20}, we may reformulate~\eqref{FOR3}, after rescaling the horizontal variable by $\lambda$,
 as a quasilinear elliptic equation subject to a  boundary condition which may be viewed as a compact, but at the same time  nonlocal and nonlinear, perturbation of the trace operator, see~\eqref{PBF}.
 The wavelength~$\lambda$ appears as a free parameter in~\eqref{PBF} and we show that the local bifurcation theorem of Crandall and Rabinowitz, cf.  \cite[Theorem~1.7]{CR71}, can be applied in the context 
 of~\eqref{PBF} to prove our main result and  establish in this way the existence  of solutions to \eqref{For1} within the regularity class introduced in Proposition~\ref{P:Ep}.

\begin{thm}\label{MT1} Let $\alpha>0$, $d>0$, $p_0<0,$  and $g>0$ be fixed  and choose $\beta\in(0,1)$. Assume that the vorticity  function $\gamma$ belongs to   $ {\rm C}^\beta([p_0,0])$ and set
\[
\Gamma(p):=\int_{p_0}^p\gamma(s)\,{\rm d}s,\qquad p\in[p_0,0].
\]
Then we have:
\begin{itemize}
\item[(i)] The problem \eqref{For1}  has laminar solutions $(u,\, v,\, P,\,\eta)=(u_*,v_*,P_*,0)$, with~${u_*,\,v_*,\,P_*}$  independent of the $x$-variable, iff
\begin{equation}\label{COND1Thm}
\lim_{\vartheta\searrow2\underset{[p_0,0]}\max\G} \int_{p_0}^0(\vartheta-2\G(s))^{-1/2}{\rm d}s>d.
\end{equation}
If~\eqref{COND1Thm} is satisfied, there exists exactly one laminar solution $(u_*,v_*,P_*,0)$ to~\eqref{For1}.
\item[(ii)] Assume that the condition \eqref{COND1Thm} holds true and set $a:=(\vartheta-2\G)^{1/2}\in{\rm C}^{1+\beta}([p_0,0]),$ where $\vartheta>2\,\underset{[p_0,0]}\max\,\G $ is the unique constant which  satisfies
\begin{equation*} 
\int_{p_0}^0(\vartheta-2\G(s))^{-1/2}{\rm d}s=d.
\end{equation*}
 Then:
 \begin{itemize}
\item[(iia)] If  
\begin{equation}\label{COND2Thm}
g  \int_{p_0}^0\frac{1}{a^{3}(p)}{\rm d}p< 1
\end{equation}
does not hold, there exist no solutions  to \eqref{For1} which bifurcate from the trivial branch of laminar flow solutions $\{(\lambda, u_*,v_*,P_*,0)\,:\, \lambda>0\};$ 
\item[(iib)] If \eqref{COND2Thm} is satisfied,  there exists a unique minimal wavelength~$\lambda_*>0$ with the property that~$(\lambda_*, u_*,v_*,P_*,0)$ is a local bifurcation point (of the trivial branch) of solutions to \eqref{For1}.
More precisely, there exists a local bifurcation curve  
 \[
\mathcal{C}=\{(\lambda(s), u(s),v(s),P(s),\eta(s))\,:\, s\in(-\e,\e)\},
 \]
  where $\e>0$ is a small constant, having  the following properties:
    \begin{itemize}
    \item[$\bullet$] $[s\mapsto \lambda(s)]$ is smooth, $\lambda(s)>0$ for $s>0,$ and $\lambda(s)=\lambda_*+O(s)$ for $s\to0$;
    \item[$\bullet$]  $(u(0),v(0),P(0),\eta(0))=(u_*,v_*,P_*,0)$;
    \item[$\bullet$] For $s\neq0,$ the tupel $(u(s),v(s),P(s),\eta(s))$ is a  solution  to \eqref{For1} with minimal wavelength $\lambda(s)$  and vorticity function $\gamma$. 
    Moreover, the wave profile  has  one crest (located on the vertical line $x=0$) and one trough per period, is symmetric with respect to crest and trough lines, 
    and  strictly monotone between crest and trough.
\end{itemize}
\end{itemize}
\end{itemize}
\end{thm}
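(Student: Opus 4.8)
The plan is to handle the two assertions separately: first construct the laminar flow by an elementary ODE analysis, and then install the Crandall--Rabinowitz bifurcation machinery on the reformulated problem \eqref{PBF}.

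For part (i), a laminar solution has $v\equiv0$ and $u,P$ depending only on $y$. I would record each streamline by the relative mass-flux variable $p\in[p_0,0]$, so that the horizontal speed along the streamline labelled $p$ is $u=-a(p)$ with $a>0$. Bernoulli's law combined with the laminar vorticity relation $aa'=-\gamma$ forces $a^2=\vartheta-2\G$ for a constant $\vartheta$, and translating the prescribed depth into the $p$-variable gives precisely $\int_{p_0}^0 a^{-1}\,{\rm d}p=d$; the flat-surface pressure condition $P=\alpha H(0)=0$ then closes the system. Existence and uniqueness of $\vartheta$ reduce to the monotonicity of $\Phi(\vartheta):=\int_{p_0}^0(\vartheta-2\G(s))^{-1/2}\,{\rm d}s$ on $(2\max_{[p_0,0]}\G,\infty)$: the map $\Phi$ is continuous and strictly decreasing, tends to $0$ as $\vartheta\to\infty$, and its supremum is exactly the left-hand side of \eqref{COND1Thm}; hence $\Phi$ attains the value $d$, and does so at a unique $\vartheta$, if and only if \eqref{COND1Thm} holds.

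For part (ii), I would use Proposition \ref{P:Ep} to pass to the height-function formulation \eqref{FOR3} and then to its rescaled, quasilinear-elliptic reformulation \eqref{PBF}, in which $\lambda$ is a free parameter and the laminar flow is the trivial branch. Writing the problem as $\mathcal{F}(\lambda,w)=0$ in suitable little-Hölder spaces (with $\mathcal{F}(\lambda,0)=0$) and computing $L_\lambda:=\p_w\mathcal{F}(\lambda,0)$, the decisive structural gain is that the fourth-order bending term enters only as a compact, nonlocal perturbation of the trace operator, so that $L_\lambda$ is a zero-index Fredholm operator amenable to Schauder theory. Separating variables $w=M(p)\cos(\kappa x)$, where $\kappa=2\pi n/\lambda$, reduces the kernel to the Sturm--Liouville problem $(a^3M')'=\kappa^2 aM$ on $(p_0,0)$, with $M$ vanishing at the bottom streamline and the linearized dynamic condition $a^3M'+(g+\alpha\kappa^4)M=0$ at the surface streamline. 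Normalizing the bottom solution and feeding it into the surface condition defines a dispersion function $D(\kappa)$, and I would establish that $D$ is strictly decreasing with $D(\kappa)\to-\infty$ as $\kappa\to\infty$ and $D(0)=1-g\int_{p_0}^0 a^{-3}\,{\rm d}p$. Consequently $D$ possesses a positive root — necessarily unique and simple — if and only if $D(0)>0$, that is, if and only if \eqref{COND2Thm} holds. If \eqref{COND2Thm} fails, $L_\lambda$ is invertible for every $\lambda>0$, and the implicit function theorem rules out nontrivial solutions near the trivial branch, giving (iia); if it holds, the root $\kappa_*$ determines the fundamental bifurcation wavelength $\lambda_*=2\pi/\kappa_*$ of (iib).

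To conclude (iib) I would verify the Crandall--Rabinowitz hypotheses at $\lambda=\lambda_*$. Restricting to even, $\lambda$-periodic profiles removes the sine modes, and uniqueness of the root $\kappa_*$ excludes the higher harmonics (a mode $n\ge2$ would require $D(n\kappa_*)=0$), so that $\ker L_{\lambda_*}$ is one-dimensional, spanned by the profile whose leading surface term is $\cos(2\pi x/\lambda_*)$; simplicity of $\kappa_*$, i.e. $D'(\kappa_*)\neq0$, supplies the transversality condition $\p_\lambda L_{\lambda_*}[\ker]\notin\ima L_{\lambda_*}$. An application of \cite[Theorem~1.7]{CR71} then yields the curve $\mathcal{C}$, and undoing the rescaling and the change of variables of Proposition \ref{P:Ep} returns genuine solutions of \eqref{For1} in the asserted regularity class; the leading-order profile forces the stated symmetry, the single crest and trough per minimal period, and the strict monotonicity in between. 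I expect the dispersion analysis to be the main obstacle — specifically the strict monotonicity of $D$ and its boundary behavior, which is what makes \eqref{COND2Thm} both necessary and sufficient — with the verification that the bending term is a genuinely compact perturbation in the chosen spaces (needed for the Fredholm property) the other delicate point.
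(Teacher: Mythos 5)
Your overall architecture coincides with the paper's: part (i) via the explicit quadrature $a^2=\vartheta-2\G$ and monotonicity of $\vartheta\mapsto\int_{p_0}^0(\vartheta-2\G)^{-1/2}{\rm d}s$ (this is exactly Lemma~\ref{L:0}); part (ii) via the height-function reformulation, the observation that the bending term acts as a compact nonlocal perturbation of $\tr_0$, Fourier reduction to the Sturm--Liouville problem \eqref{BVPk}, and Crandall--Rabinowitz. The one genuinely different (and in fact cleaner) choice you make is to exploit the scaling invariance from the outset and work with a single-variable dispersion function $D(\kappa)$, $\kappa=2k\pi/\lambda$. The paper instead keeps $(\lambda,\mu)$ as independent variables, studies the Wronskian $W(\lambda,\mu)=(g\lambda^4+\alpha\mu^2)f_1(0)-\lambda^4a^3(0)f_1'(0)$, and only \emph{a posteriori} recovers the scaling through the identity $W_\lambda/W_\mu=-2\mu/\lambda$ at zeros of $W$ (see \eqref{wonder} and \eqref{c0}), which integrates to $\mu(\lambda)=C_0\lambda^2$. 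Since $W(\lambda,\lambda^2\kappa^2)=-\lambda^4a^3(p_0)D(\kappa)$, your formulation makes that identity automatic; what each approach buys is a trade-off between transparency of the scaling (yours) and having both partial derivatives $W_\lambda$, $W_\mu$ explicitly at hand for the transversality computation (the paper's).

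Two points in your plan need repair. First, you assert that $D$ is \emph{globally} strictly decreasing and flag this as the main obstacle; that global statement is not what the paper proves and is not needed. As $\kappa$ grows, both $a^3(0)f_1'(0)$ and $(g+\alpha\kappa^4)f_1(0)$ increase, so the sign of $D'$ is not clear away from the zero set. What is actually provable --- and sufficient for uniqueness of the root --- is sign-definiteness of the derivative \emph{at each zero} of $D$: there the two fundamental solutions $f_1$ and $f_2$ are proportional, which lets one substitute the surface condition into the energy identities obtained by multiplying \eqref{BVPk} by $f_1$ and integrating; the resulting expression is shown to be negative using the Cauchy--Schwarz estimate $gf_1^2(0)\le g\big(\int_{p_0}^0a^{-3}\big)\big(\int_{p_0}^0a^3f_1'^2\big)$ together with \eqref{COND2Thm} (Lemma~\ref{L:5} and \eqref{baba}). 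You should reformulate your monotonicity claim accordingly. Second, ``$D'(\kappa_*)\neq0$ supplies transversality'' is the right heuristic but not yet an argument: one must first identify the annihilator of $\ima\p_h\cF(\lambda_*,0)$ (the paper's Lemma~\ref{L:7}, which requires integrating against $a^3h_*$ and handling the nonlocal operator $(1-\p_q^2)^{-1}$ by symmetry) and then evaluate the pairing with $\p_{\lambda h}\cF(\lambda_*,0)[h_*]$; this computation (Lemma~\ref{L:8}) again invokes \eqref{COND2Thm} through the same Cauchy--Schwarz estimate, so the hypothesis enters the proof in three distinct places, not only through the sign of $D(0)$. With these two adjustments your outline matches the paper's proof.
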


Concerning  Theorem~\ref{MT1}, we add the following remarks.
\begin{rem}\label{R:1}\phantom{A}
\begin{itemize}
\item[(i)] If $0\neq |s|<\e$, then $(u(s),v(s),P(s),\eta(s))$ is also of solution to \eqref{For1} having (not minimal) period $k\lambda(s)$  for all $1\leq k\in\N$.
In particular, for each $k\geq 1$,   $(k\lambda_*, u_*,v_*,P_*,0)$ is also a local bifurcation point of the trivial branch of solutions to \eqref{For1}.
In Theorem~\ref{MT1} we prove that these are the only points on the trivial branch of solutions from where other symmetric solutions bifurcate. 
\item[(ii)] Our analysis discloses, under the assumption~\eqref{COND2Thm}, that  bifurcation from double (actually multiple) eigenvalues of symmetric waves is excluded along the trivial branch of laminar solutions to the hydroelastic waves problem \eqref{For1}.
\end{itemize}
\end{rem}

We now illustrate the conditions for bifurcation from Theorem~\ref{MT1} in the particular case of irrotational waves (with $\gamma=0$). 
\begin{rem}\label{R:2} If $\gamma=0$, then \eqref{COND1Thm} is automatically fulfilled and the inequality \eqref{COND2Thm} is equivalent to
\[
\frac{gd^3}{p_0^2}<1.
\]
Moreover, the  wavelength  $\lambda_*>0$ can be determined as the unique solution to the equation
\begin{equation}\label{disperrel}
\Big[g+\alpha\Big(\frac{2\pi}{\lambda_*}\Big)^4\Big]\tanh\Big(\frac{2\pi d}{\lambda_*}\Big)=\frac{p_0^2}{d^2}\frac{2\pi}{\lambda_*}.
\end{equation}
Equation~\eqref{disperrel} is the dispersion relation for irrotational hydroelastic waves.
\end{rem}

\noindent{Outline:} In Section~\ref{Sec:2} we present two further equivalent formulations of \eqref{For1}: the stream function formulation~\eqref{FOR2} and the height function formulation~\eqref{FOR3}. 
Then, in Section~\ref{Sec:3}, we reformulate~\eqref{FOR3} by reexpressing the boundary condition in~\eqref{FOR3} obtained from the dynamic boundary condition as a compact, but nonlinear and nonlocal, perturbation of a Dirichlet boundary condition, see~\eqref{PBF}.
Finally, in Section~\ref{Sec:4}, we recast~\eqref{PBF} as a bifurcation problem and prove Theorem~\ref{MT1}.

\section{Equivalent formulations of the hydroelastic waves problem}\label{Sec:2}
In this section we introduce two further equivalent formulations of the hydroelastic waves problem \eqref{For1} which have  been useful also when constructing rotational water waves in other physical scenarios,
 cf. e.g. \cite{Con11, HM15, SUR22}.  

\subsection{The velocity formulation} The stream function $\psi:\overline{\0_\eta}\to\R$  is defined by the equations
\[
\psi=0 \quad\text{on $y=\eta(x)$}\qquad\text{and}\qquad \nabla\psi=(-v,u)\quad\text{in $\0_\eta$}.
\]
Since $\psi_y<0$, cf. \eqref{For1'}, the constant $p_0:=-\psi|_{y=-d}$ is negative.

Let further $\mathcal{H}:\ov{\0_\eta}\to\ov\0$, where $\0:=\R\times(p_0,0),$ be defined by the formula
\[
\kH(x,y):=(q(x,y),p(x,y)):=(x,-\psi(x,y)).
\]
As a consequence of  \eqref{For1'}, the function $\kH$ is a bijection.
For smooth solutions to \eqref{For1} we then compute
\begin{align*}
\p_q(\overline\omega\circ \kH^{-1}) =\Big(\overline\omega_x+\frac{v}{u}\overline\omega_y\Big)\circ \kH^{-1}=0\quad\text{in $\0$,}
\end{align*}
since \eqref{P1}$_1$-\eqref{P1}$_3$ yield 
\[
u\overline\omega_x+v\overline\omega_y=0\quad\text{in $\0_\eta$}.
\]
Hence,  there exists a function $\gamma:[p_0,0]\to\R$, the so-called vorticity function, with the property  that $\overline\omega\circ \kH^{-1}(q,p)=\gamma(p)$ for all $(q,p)\in\ov\0,$ or equivalently 
\[
\overline\omega(x,y)=\gamma(-\psi(x,y))\quad\text{for all $(x,y)\in\ov{\0_\eta}$.}
\]
This relation together with \eqref{P1}$_1$-\eqref{P1}$_2$ implies that the energy
\[
E:=P+\frac{u^2+v^2}{2}+gy-\int_0^\psi\gamma(-s) \, {\rm d}s 
\]
is constant in $\0_\eta.$
Evaluating  this expression at the wave surface, we deduce together with  the relation~\eqref{P1}$_4$, that 
\begin{equation}\label{Ber1}
|\nabla \psi|^2+2g\eta+2\alpha H(\eta)=Q\qquad\text{on $y=\eta(x)$,}
\end{equation}
where $Q$ is a constant.
Integration  by parts further leads to  
\[
\int_0^\lambda  H(\eta)\, {\rm d}x=0,
\]
and, since also $\eta$ has zero  integral mean, we infer from \eqref{Ber1}, after integrating over one period, that 
\[
Q=\frac{1}{\lambda}\int_0^\lambda|\nabla \psi|^2(x,\eta(x))\, {\rm d}x.
\]
Consequently, $\psi$ solves the boundary value problem 
\begin{subequations}\label{FOR2}
\begin{equation}\label{For2}
\left.
\arraycolsep=1.4pt
\begin{array}{rllllll}
\Delta\psi&=&\gamma(-\psi)&\quad\text{in $\0_\eta$},\\[1ex]
\psi&=&0&\quad\text{on $y=\eta(x),$}\\[1ex]
\psi&=&-p_0&\quad\text{on $y=-d,$}\\[1ex]
|\nabla \psi|^2+2g\eta+2\alpha H(\eta)&=&\displaystyle\frac{1}{\lambda}\int_0^\lambda|\nabla \psi|^2(x,\eta(x)\, {\rm d}x&\quad\text{on $y=\eta(x)$}
\end{array}
\right\}
\end{equation}
and satisfies
\begin{equation}\label{For2'}
\psi_y<0\quad\text{in $\ov{\0_\eta}$}.
\end{equation}
\end{subequations}

\subsection{The height function formulation} 

We define the height function $h:\ov\0\to\R$ by 
\[
h(q,p)=y,
\]
which associates to a point $(q,p)\in\ov\0$  the  vertical coordinate  of the fluid particle located at~${(x,y)=\cH^{-1}(q,p)}\in\ov{\0_\eta}$.
Then, since  $\eta=h(\cdot,0)$, the function $h$ solves the following boundary value problem
\begin{subequations}\label{FOR3}
\begin{equation}\label{For3}
\left.
\arraycolsep=1.4pt
\begin{array}{rllllll}
(1+h_q^2)h_{pp}-2h_ph_qh_{pq}+h_p^2h_{qq}-\gamma h_p^3&=&0&\quad\text{in $\0$,}\\[1ex]
h&=&-d&\quad\text{on $p=p_0$,}\\[1ex]
\displaystyle\frac{1+h_q^2}{h_p^2}+2gh+2\alpha H(h)&=& \displaystyle\frac{1}{\lambda}\int_0^\lambda\frac{1+h_q^2}{h_p^2}(q,0)\, {\rm d}q&\quad\text{on $p=0$,}
\end{array}
\right\}
\end{equation}
together with 
\begin{equation}\label{For3'}
h_p>0\quad\text{in $\ov{\0}$}.
\end{equation}
\end{subequations}

\begin{prop}[Equivalence of formulations]\label{P:Ep} 
Let  $\beta\in(0,1).$ 
 Then, the following formulations are equivalent:
\begin{itemize}
\item[(i)] The velocity formulation \eqref{For1} for \[\text{ $u,\,v,\,P\in {\rm C}^{1+\beta}(\ov{\0_\eta}) $ and 
$\eta\in {\rm C}^{4}(\R)$;}\]
\item[(ii)] The stream function formulation  \eqref{FOR2} for
\[\text{ $\psi\in {\rm C}^{2+\beta}(\ov{\0_\eta}) $, $\eta\in {\rm C}^{4}(\R) $, and~${\gamma\in{\rm C}^{\beta}([p_0,0])}$;}\]
\item[(iii)] The height function formulation \eqref{FOR3} for 
\[\text{$h \in{\rm C}^{2+\beta}(\ov\0) $ with $\tr_0 h\in {\rm C}^{4}(\R)$,   and $\gamma\in{\rm C}^{\beta}([p_0,0])$.}\]
\end{itemize}
\end{prop}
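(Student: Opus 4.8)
The plan is to use the stream function $\psi$ as the bridge between the velocity formulation and the height function obtained from the partial hodograph transform $\kH$, proving the two equivalences (i)$\Leftrightarrow$(ii) and (ii)$\Leftrightarrow$(iii) separately. For (i)$\Rightarrow$(ii) I would observe that the incompressibility relation \eqref{P1}$_3$ is precisely the integrability condition $\p_y(-v)=\p_x u$ making the one-form $-v\,{\rm d}x+u\,{\rm d}y$ exact on the simply connected strip $\0_\eta$; integrating it and fixing the additive constant by $\psi=0$ on $y=\eta(x)$ produces a $\psi$ with $\nabla\psi=(-v,u)$ that inherits ${\rm C}^{2+\beta}(\ov{\0_\eta})$ regularity from $u,v\in{\rm C}^{1+\beta}(\ov{\0_\eta})$. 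Its $\lambda$-periodicity in $x$ follows because $m(y):=\int_x^{x+\lambda}v\,{\rm d}x'$ satisfies $m'(y)=-\int_x^{x+\lambda}u_x\,{\rm d}x'=0$ and $m(-d)=0$ from the bottom condition, whence $m\equiv0$ and $\psi(\cdot+\lambda,\cdot)=\psi$. The interior equation is then the pointwise identity $\Delta\psi=u_y-v_x=\oo=\gamma(-\psi)$, the Dirichlet data $\psi=0$ and $\psi=-p_0$ are immediate, $\psi_y=u<0$ gives \eqref{For2'}, and the surface relation \eqref{Ber1}, together with the evaluation of $Q$, has already been carried out above; the vorticity function inherits ${\rm C}^\beta$ regularity from $\oo=u_y-v_x$.

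For (ii)$\Rightarrow$(i) I would set $(u,v):=(\psi_y,-\psi_x)\in{\rm C}^{1+\beta}(\ov{\0_\eta})$, so that \eqref{P1}$_3$ is automatic and $\oo=\Delta\psi=\gamma(-\psi)$. Reversing the energy computation, I would define
\[
P:=\frac{Q}{2}-\frac{1}{2}|\nabla\psi|^2-gy+\int_0^\psi\gamma(-s)\,{\rm d}s\in{\rm C}^{1+\beta}(\ov{\0_\eta}),
\]
and verify that differentiating $P$ and inserting $\Delta\psi=\gamma(-\psi)$ reproduces the Euler equations \eqref{P1}$_1$--\eqref{P1}$_2$; the surface condition \eqref{For2}$_4$ then forces $P=\alpha H(\eta)$ on $y=\eta(x)$, which is \eqref{P1}$_4$, while the kinematic and bottom conditions follow from $\psi=0$ and $\psi=-p_0$.

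For (ii)$\Leftrightarrow$(iii) the strict monotonicity $\psi_y<0$ makes $\kH(x,y)=(x,-\psi(x,y))$ a ${\rm C}^{2+\beta}$-diffeomorphism of $\ov{\0_\eta}$ onto $\ov\0$, so that $h(q,p):=y$, with $(x,y)=\kH^{-1}(q,p)$, lies in ${\rm C}^{2+\beta}(\ov\0)$; the condition $h_p>0$ encodes $\psi_y<0$ and allows one to invert the construction. I would record the first-order relations $h_p=-1/\psi_y$ and $h_q=-\psi_x/\psi_y$, from which $\psi_x^2+\psi_y^2=(1+h_q^2)/h_p^2$, and substitute these (together with the induced expressions for the second derivatives) into $\Delta\psi=\gamma(-\psi)$ to obtain, after clearing the denominator $h_p^3$, the quasilinear equation \eqref{For3}$_1$. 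The level set $p=p_0$ corresponds to $y=-d$, giving $h=-d$ there, the level set $p=0$ corresponds to $y=\eta(x)$, giving $\tr_0 h=\eta$ and hence $H(h)=H(\eta)$ on $p=0$, and the identity for $\psi_x^2+\psi_y^2$ turns the dynamic condition \eqref{For2}$_4$ into \eqref{For3}$_3$.

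The main obstacle is the careful regularity bookkeeping rather than any single computation, since the three formulations carry a deliberate mismatch: the interior regularity is only ${\rm C}^{2+\beta}$, whereas the free surface $\eta=\tr_0 h$ must be ${\rm C}^4$ in order that the fourth-order operator $H$ be continuous on the boundary. I would therefore check that $\kH$ transports ${\rm C}^{2+\beta}$ interior regularity and ${\rm C}^4$ boundary-trace regularity in both directions without loss, using that $H$ only ever acts on the smoother trace and that $\kH$ restricts to the identity in the $x$-variable. Finally I would confirm that $\lambda$-periodicity (preserved because $q=x$) and the zero-mean normalization \eqref{E:4}, together with the compatible value of $Q$ obtained from $\int_0^\lambda H(\eta)\,{\rm d}x=0$, are carried along all three formulations.
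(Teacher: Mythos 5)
Your proposal is correct and follows essentially the same route as the paper, which simply refers to the proof of \cite[Lemma 2.1]{CoSt04}: constructing $\psi$ from the exact one-form $-v\,{\rm d}x+u\,{\rm d}y$, recovering $P$ from the Bernoulli law, and passing to $h$ via the partial hodograph transform $\kH$, with the only new ingredients being the fourth-order term $\alpha H(\eta)$ in the dynamic condition and the attendant ${\rm C}^{4}$ bookkeeping for $\eta=\tr_0 h$, both of which you handle as the paper intends. Nothing further is needed.
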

\begin{proof}
The proof is similar to that of \cite[Lemma 2.1]{CoSt04}.
\end{proof}

 \section{An equivalent formulation of   \eqref{FOR3}}\label{Sec:3}
 In our analysis we will take advantage of  the height function formulation~\eqref{FOR3} to establish the existence of steady periodic hydroelastic waves.
 The main tool used to achieve this goal is the local bifurcation theorem of Crandall and Rabinowitz, cf.  \cite[Theorem~1.7]{CR71}.
 The appropriate parameter for bifurcation is the wavelength $\lambda>0$.
 Since $h$ is $\lambda$-periodic with respect to $q$ it is therefore suitable to rescale $h$  according to 
\begin{equation}\label{Scal}
\wt h(q,p):= h(\lambda q,p),\qquad (q,p)\in\ov\0.
\end{equation}
The function $\wt h$  is $1$-periodic and solves (after dropping  tildes) the equations
\begin{subequations}\label{FOR4}
\begin{equation}\label{For4}
\left.
\arraycolsep=1.4pt
\begin{array}{rllllll}
(\lambda^2+h_q^2)h_{pp}-2h_ph_qh_{pq}+h_p^2h_{qq}-\lambda^2\gamma h_p^3&=&0&\quad\text{in $\0$,}\\[1ex]
h&=&-d&\quad\text{on $p=p_0$,}\\[1ex]
\displaystyle\frac{\lambda^2+h_q^2}{h_p^2}+2g\lambda^2h+\frac{2\alpha}{\lambda}H \Big(\frac{h}{\lambda}\Big)&=& \displaystyle\int_0^1\tr_0\frac{\lambda^2+h_q^2}{h_p^2} \, {\rm d}q&\quad\text{on $p=0 $}
\end{array}
\right\}
\end{equation}
and 
\begin{equation}\label{For4'}
h_p>0\quad\text{in $\ov{\0}$}.
\end{equation}
\end{subequations}
In the following $\tr_0$ is the trace operator with respect to the boundary component $\{p=0\}$ of $\0$, that is, given $f:\ov\Omega\to\R$, 
   the function $\tr_0 f:\R\to\R$ is defined by $\tr_0 f(q)=f(q,0)$ for~$q\in\R$.

Let $\beta\in(0,1) $ be fixed.
We will assume that $h\in\bX$, where the Banach spaces $\bX$ is defined as follows
\[
\bX:=\Big\{h\in{\rm C}^{2+\beta}(\ov\0)\,:\, \text{$h$ is even and $1$-periodic with repect to $q$ and $\int_0^1\tr_0h\, {\rm d}q=0$}\Big\}.
\]
Similarly, given $k\in\N$, the space ${\rm C}_e^{k+\beta}(\R)$  consists of the even and $1$-periodic  functions with uniformly $\beta$-H\"older continuous $k$th derivative.
Moreover, ${\rm C}_{e,0}^{k+\beta}(\R)$  is the subspace of~${{\rm C}_e^{k+\beta}(\R)}$ which contains only functions with zero integral mean.
We note that the boundary condition~\eqref{For4}$_3$ is not well-defined for~$h\in\bX$ as fourth order derivatives of $h$ appear in  this equation. 
However, since of $H(h/\lambda)$ involves only derivatives of~$h$ with respect to the horizontal  variable $q,$ we may reformulate the boundary condition \eqref{For4}$_3$  as a nonlocal and nonlinear  compact  
 perturbation of the trace operator $\tr_0$.
 
 To this end we set  $\zeta:=\tr_0 h/\lambda $ and note that, if $h\in\bX$ satisfies $\tr_0h\in{\rm C}_e^{4+\beta}(\R) $, \eqref{For4'}, and the  boundary condition  \eqref{For4}$_3$, then
 \[
H(\zeta)=B(\lambda,h), 
 \]
where $B:(0,\infty)\times\{h\in\bX\,:\,  \text{$h_p>0$ in $\ov\0$}\}\to  {\rm C}_{e,0}^{1+\beta}(\R)$ is the smooth mapping defined by
 \begin{equation}\label{Op:B}
B(\lambda,h):=\frac{\lambda}{2\alpha} \bigg[\int_0^1\tr_0\frac{\lambda^2+h_q^2}{h_p^2}\, {\rm d}q-\tr_0\Big(\frac{\lambda^2+h_q^2}{h_p^2} +2g\lambda^2h\Big)\bigg].
 \end{equation}
Integration leads to 
 \[
\int_0^x H(\zeta) \, {\rm d}s=\int_0^x B(\lambda,h) \, {\rm d}s\qquad\text{for all $x\in\R$.}
 \]
 In view of the fact that  $\tr_0h$ is an even function we obtain that 
  \[
\int_0^x  H(\zeta) \, {\rm d}s=\big[\omega^{-2}(\zeta)(\omega^{-3}(\zeta)\zeta'')'
+\frac{1}{2}\zeta'\zeta''^2\omega^{-7}(\zeta)\big](x),\qquad x\in\R,
 \]
where $\omega(\cdot)$ is the nonlinear operator defined in \eqref{E:3}, hence
 \begin{equation}\label{eq:zeta'''}
\Big[\omega^{-2}(\zeta)(\omega^{-3}(\zeta)\zeta'')'+\frac{1}{2}\zeta'\zeta''^2\omega^{-7}(\zeta)\Big](x)=\int_0^x B(\lambda,h) \, {\rm d}s\qquad\text{  for all $x\in\R$.}
 \end{equation}
 Integrating the last relation once more we arrive at
 \begin{equation}\label{eq:zeta''}
\zeta''(q)= \omega^{5} (\zeta)(q)\Big(C+\int_0^q\int_0^x B(\lambda,h) \, {\rm d}s\,{\rm d}x-\frac{5}{2}\int_0^q \zeta' \zeta''^2\omega^{-7}(\zeta)\, {\rm d}x\Big)
 \end{equation}
 for all $q\in\R$, where $C:=\zeta''(0).$
 Letting $\Phi:(0,\infty)\times \{h\in\bX\,:\, \text{$h_p>0$ in $\ov\0$}\}\to {\rm C}^{1+\alpha}_e(\R)$ be defined by
  \begin{equation}\label{Op:Phi}
\Phi(\lambda,h)(q):= \int_0^q\int_0^x B(\lambda,h) \, {\rm d}s\,{\rm d}x
-\frac{5}{2}\int_0^q \Big(\frac{\tr_0 h}{\lambda}\Big)' \Big[\Big(\frac{\tr_0 h}{\lambda}\Big)''\Big]^2\omega^{-7}\Big(\frac{\tr_0 h}{\lambda}\Big) \, {\rm d}x
\end{equation}
 for $q\in\R,$ the previous equality identifies, since $\zeta''$ has zero integral mean, the constant $C$ as
 \[
C=-\Big(\int_0^1\omega^{5} (\zeta){\rm d}q\Big)^{-1}\int_0^1 \omega^5(\zeta) \Phi(\lambda,h)\,{\rm d}q, 
 \] 
 and therefore we have
 \[
(1-\p_q^2)\zeta=\zeta+\omega^5(\zeta) \Big(\int_0^1\omega^{5} (\zeta){\rm d}q\Big)^{-1}\int_0^1 \omega^5(\zeta) \Phi(\lambda,h){\rm d}q-\omega^5(\zeta) \Phi(\lambda,h)\in {\rm C}^{1+\alpha}_{e,0}(\R).
 \]
Since $1-\p_q^2:{\rm C}^{3+\alpha}_{e,0}(\R)\to{\rm C}^{1+\alpha}_{e,0}(\R)$ is an isomorphism, we get 
 \[
\zeta=(1-\p_q^2)^{-1}\Big[\zeta+\omega^5(\zeta) \Big(\int_0^1\omega^{5} (\zeta)\,{\rm d}q\Big)^{-1}\int_0^1 \omega^5(\zeta) \Phi(\lambda,h)\,{\rm d}q-\omega^5(\zeta) \Phi(\lambda,h)\Big]\in {\rm C}^{3+\alpha}_{e,0}(\R).
 \]
 This proves that $\tr_0h$ satisfies \eqref{eq:equiv}  and therewith the first implication  in Lemma~\ref{L:1} below. 

 \begin{lemma}\label{L:1} Let $h\in\bX$ satisfy \eqref{For4'}. Then the following are equivalent:
 \begin{itemize}
\item[(i)] $\tr_0h\in{\rm C}_e^{4+\beta}(\R) $ and $h$ satisfies \eqref{For4}$_3$;
\item[(ii)] With  $\Phi$ defined in \eqref{Op:Phi} we have
\begin{equation}\label{eq:equiv}
\begin{aligned}
\tr_0h&=(1-\p_q^2)^{-1}\Big[\lambda\omega^5(\tr_0h/\lambda) \Big(\int_0^1\omega^{5} (\tr_0h/\lambda)\,{\rm d}q\Big)^{-1}\int_0^1 \omega^5(\tr_0h/\lambda) \Phi(\lambda,h)\,{\rm d}q\\[1ex]
&\hspace{2.5cm}+\tr_0h-\lambda\omega^5(\tr_0h/\lambda) \Phi(\lambda,h)\Big].
\end{aligned}
\end{equation}
 \end{itemize}
 \end{lemma}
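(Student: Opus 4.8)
The implication (i)$\,\Rightarrow\,$(ii) is exactly the computation carried out before the statement, so the plan is to establish the converse (ii)$\,\Rightarrow\,$(i). Writing $\zeta:=\tr_0h/\lambda$ and abbreviating $\omega:=\omega(\zeta)$, I would split the argument into two parts: first boost the a priori regularity $\tr_0h\in{\rm C}^{2+\beta}_{e,0}(\R)$ afforded by $h\in\bX$ up to $\tr_0h\in{\rm C}^{4+\beta}(\R)$, and then run the chain of integrations that produced \eqref{eq:equiv} in reverse so as to recover the boundary condition \eqref{For4}$_3$.

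For the bootstrap the decisive point is that $B(\lambda,h)$ defined in \eqref{Op:B} depends on $h$ only through $\tr_0h$ and the traces of the first-order derivatives $h_q,h_p$, which are frozen once $h\in{\rm C}^{2+\beta}(\ov\0)$ is fixed; hence $B(\lambda,h)\in{\rm C}^{1+\beta}_{e,0}(\R)$ and the double primitive $\int_0^q\!\int_0^x B\,{\rm d}s\,{\rm d}x$ contributes a fixed ${\rm C}^{3+\beta}$ term to $\Phi(\lambda,h)$. Assume inductively that $\tr_0h\in{\rm C}^{k+\beta}$ with $k\geq2$. Then $\omega^5(\zeta)\in{\rm C}^{(k-1)+\beta}$ and, reading off \eqref{Op:Phi}, the second integral in $\Phi$ lies in ${\rm C}^{(k-1)+\beta}$, so $\Phi\in{\rm C}^{\min\{3,\,k-1\}+\beta}$. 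Consequently the bracket on the right-hand side of \eqref{eq:equiv} belongs to ${\rm C}^{\min\{3,\,k-1\}+\beta}$, even in $q$; since $1-\p_q^2$ smooths by exactly two orders, \eqref{eq:equiv} upgrades $\tr_0h$ to ${\rm C}^{\min\{5,\,k+1\}+\beta}$. Iterating $k\mapsto\min\{5,k+1\}$ from $k=2$ yields $k\geq4$ after two steps, which gives $\tr_0h\in{\rm C}^{4+\beta}(\R)$.

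With this regularity in hand I would reverse the derivation. Applying $1-\p_q^2$ to \eqref{eq:equiv} and using $\tr_0h=\lambda\zeta$ recovers \eqref{eq:zeta''} (the free constant being $\zeta''(0)$); multiplying by $\omega^{-5}(\zeta)$ and differentiating once recovers \eqref{eq:zeta'''}. Differentiating \eqref{eq:zeta'''} once more and invoking the pointwise identity
\[
\p_q\Big[\omega^{-2}(\zeta)\big(\omega^{-3}(\zeta)\zeta''\big)'+\tfrac12\zeta'\zeta''^2\omega^{-7}(\zeta)\Big]=H(\zeta),
\]
which is the calculus identity underlying the displayed primitive in the forward direction, yields $H(\zeta)=B(\lambda,h)$. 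Because $H$ involves only $q$-derivatives we have $H(h/\lambda)\big|_{p=0}=H(\zeta)$, and unwinding the definition \eqref{Op:B} of $B$ turns $H(\zeta)=B(\lambda,h)$ back into \eqref{For4}$_3$; every differentiation here is legitimate precisely because $\tr_0h\in{\rm C}^{4+\beta}$.

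The main obstacle is the bootstrap of the second paragraph: one must check that regularity genuinely propagates through the nonlinear composition $\omega^5(\zeta)$ and the nonlocal, twice-integrated operator $\Phi$, and that the scheme saturates at or above the required order rather than stalling below ${\rm C}^{4+\beta}$. What makes it succeed is that the least regular ingredient, $B$, is independent of the higher derivatives of $\tr_0h$, so each application of $(1-\p_q^2)^{-1}$ strictly improves the trace until it stabilizes.
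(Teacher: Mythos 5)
Your proposal is correct and follows essentially the same route as the paper: the forward implication is the computation preceding the lemma, and the converse is obtained by upgrading the regularity of $\tr_0h$ and then reversing the chain of integrations to recover $H(\zeta)=B(\lambda,h)$ and hence \eqref{For4}$_3$. The only (harmless) difference is in the last regularity step: the paper gets $\zeta\in{\rm C}^{3+\beta}$ from \eqref{eq:equiv} in one application of $(1-\p_q^2)^{-1}$ and then reads off $\zeta\in{\rm C}^{4+\beta}$ from the third-order equation \eqref{eq:zeta'''}, whereas you iterate the fixed-point identity \eqref{eq:equiv} a second time to reach ${\rm C}^{4+\beta}$ directly; both are valid.
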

 \begin{proof} It remains to prove that (ii) implies (i).
 Let thus Lemma~\ref{L:1}~(ii) be satisfied. 
 Then, since the argument of $(1-\p_q^2)^{-1}$ in \eqref{eq:equiv} lies in~${{\rm C}_{e,0}^{1+\beta}(\R)}$, the function~${\zeta:=\tr_0 h/\lambda}$ belongs to ${\rm C}^{3+\alpha}_{e,0}(\R)$ and
 satisfies \eqref{eq:zeta''}.  
 Multiplying now~\eqref{eq:zeta''} by $\omega^{-5}(\zeta)$ and differentiating the resulting equation once, we deduce that $\zeta$ satisfies the equation \eqref{eq:zeta'''}, hence $\zeta\in{{\rm C}_e^{4+\beta}(\R)}.$
 Differentiating  \eqref{eq:zeta'''}, we deduce that indeed~$H(\zeta)=B(\lambda,h)$, thus \eqref{For4}$_3$ holds true.
 \end{proof}

In view of Lemma~\ref{L:1} we have formulated the problem \eqref{FOR4} as the following system
\begin{subequations}\label{PBF}
\begin{equation}\label{PBF1}
\left.
\arraycolsep=1.4pt
\begin{array}{rllllll}
(\lambda^2+h_q^2)h_{pp}-2h_ph_qh_{pq}+h_p^2h_{qq}-\lambda^2\gamma h_p^3&=&0&\quad\text{in $\0$,}\\[1ex]
h&=&-d\displaystyle&\quad\text{on $p=p_0$,}\\[1ex]
h&=&\Psi(\lambda,h)&\quad\text{on $p=0,$}
\end{array}
\right\}
\end{equation}
and 
\begin{equation}\label{PBF2}
 h_p>0\quad\text{in $\ov{\0}$},
\end{equation}
\end{subequations}
where $\Psi:(0,\infty)\times\{h\in\bX\,:\,  \text{$h_p>0$ in $\ov\0$}\}\to {\rm C}^{3+\alpha}_{e,0}(\R)$ is the smooth mapping given by
\begin{equation}\label{eq:psi}
\begin{aligned}
\Psi(\lambda,h)&:=(1-\p_q^2)^{-1}\Big[\lambda\omega^5(\tr_0 h/\lambda) \Big(\int_0^1\omega^{5} (\tr_0h/\lambda)\,{\rm d}q\Big)^{-1}\int_0^1 \omega^5(\tr_0h/\lambda) \Phi(\lambda,h)\,{\rm d}q\\[1ex]
&\hspace{2.5cm}+\tr_0h-\lambda\omega^5(\tr_0h/\lambda) \Phi(\lambda,h)\Big].
\end{aligned}
\end{equation}

\section{Local bifurcation analysis}\label{Sec:4}
In this section we consider the equivalent formulation~\eqref{PBF} of the  hydroelastic waves problem \eqref{For1} and study its solutions set.
In  a first step we investigate in Section~\ref{Sec:41} the existence of laminar flow solutions to \eqref{PBF}.
Then, in Section~\ref{Sec:42} we formulate \eqref{PBF} as a bifurcation problem, see \eqref{BifE}, and determine a sufficient and necessary condition, see \eqref{COND2},  for   bifurcation from the set of laminar flow solutions.
We conclude this section with the proof of the main result.

\subsection{Laminar flow solutions for \eqref{PBF}}\label{Sec:41}
We next investigate the existence of laminar flow solutions to \eqref{PBF}, that is, given $\lambda>0$,  we look for solutions  $H=H(\lambda)\in\bX$ to~\eqref{PBF} that depend only on the variable $p$.
Then, $H\in{\rm C}^{2+\beta}([p_0,0])$     solves the Sturm–Liouville problem 
\begin{equation}\label{LFS}
\left.
\arraycolsep=1.4pt
\begin{array}{llllll}
H'' =\gamma H'^3\qquad\text{in $(p_0,0)$,}\\[1ex]
H(p_0)=-d,\quad H(0)=0
\end{array}
\right\}
\end{equation}
together with the inequality that $H'>0$ in $[p_0,0]$. 
 The next result shows that \eqref{COND1Thm} is a sufficient and necessary condition for the existence of a (unique) laminar flow solution.
 
 \begin{lemma}\label{L:0}
 The boundary value problem~\eqref{LFS} has a solution $H\in{\rm C}^{2+\beta}([p_0,0])$ with $H'>0$ in $[p_0,0]$ iff \eqref{COND1Thm} is satisfied.
 In this case the solution is unique and it is given by
 \begin{equation}\label{eq:H}
H(p)=-\int_{p}^0 (\vartheta-2\G(s))^{-1/2}{\rm d}s,\qquad p\in[p_0,0],
 \end{equation}
 where  $\vartheta>2\max_{[p_0,0]}\G$  is the unique solution to
 \begin{equation}\label{COND1}
 \int_{p_0}^0(\vartheta-2\G(s))^{-1/2}{\rm d}s=d.
 \end{equation}
 \end{lemma}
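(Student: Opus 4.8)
The plan is to analyze the ordinary differential equation $H'' = \gamma H'^3$ directly, since it is a separable second-order equation once we use the substitution $w := H'$. First I would observe that if $H' > 0$ throughout $[p_0,0]$, then we may treat $w = H' > 0$ as the new unknown. Writing $w' = H'' = \gamma w^3$, we obtain $w'/w^3 = \gamma$, which upon integrating gives $-\tfrac{1}{2} w^{-2}(p) + \tfrac{1}{2} w^{-2}(p_0) = \Gamma(p)$ (recall $\Gamma(p) = \int_{p_0}^p \gamma$, so $\Gamma(p_0)=0$). Setting $\vartheta := w^{-2}(p_0) = H'(p_0)^{-2} > 0$, this rearranges to $w(p)^{-2} = \vartheta - 2\Gamma(p)$, that is,
\[
H'(p) = (\vartheta - 2\Gamma(p))^{-1/2}.
\]
For this to define a positive, real, $\mathrm{C}^{1+\beta}$ function on all of $[p_0,0]$, we need $\vartheta - 2\Gamma(p) > 0$ for every $p$, equivalently $\vartheta > 2\max_{[p_0,0]}\Gamma$. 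Integrating $H'$ and using $H(0)=0$ yields exactly the claimed formula~\eqref{eq:H}.

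Next I would impose the remaining boundary condition $H(p_0) = -d$. Substituting $p = p_0$ into~\eqref{eq:H} gives $-\int_{p_0}^0 (\vartheta - 2\Gamma(s))^{-1/2}\,{\rm d}s = -d$, which is precisely~\eqref{COND1}. So the existence of a solution is equivalent to finding $\vartheta > 2\max_{[p_0,0]}\Gamma$ solving this integral equation. Define
\[
F(\vartheta) := \int_{p_0}^0 (\vartheta - 2\Gamma(s))^{-1/2}\,{\rm d}s, \qquad \vartheta > 2\max_{[p_0,0]}\Gamma.
\]
The key analytic step is to study $F$ on the interval $(2\max\Gamma, \infty)$. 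Differentiating under the integral sign, $F'(\vartheta) = -\tfrac{1}{2}\int_{p_0}^0 (\vartheta - 2\Gamma(s))^{-3/2}\,{\rm d}s < 0$, so $F$ is strictly decreasing; moreover $F(\vartheta) \to 0$ as $\vartheta \to \infty$ by dominated convergence. Hence $F$ is a continuous strictly decreasing bijection from $(2\max\Gamma,\infty)$ onto $(0, L)$, where
\[
L := \lim_{\vartheta \searrow 2\max_{[p_0,0]}\Gamma} F(\vartheta) = \lim_{\vartheta\searrow 2\max\Gamma}\int_{p_0}^0 (\vartheta - 2\Gamma(s))^{-1/2}\,{\rm d}s
\]
is the (possibly infinite) left-endpoint limit appearing in~\eqref{COND1Thm}. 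Consequently the equation $F(\vartheta) = d$ has a (necessarily unique) solution $\vartheta$ in the admissible range if and only if $d < L$, which is exactly condition~\eqref{COND1Thm}; and when a solution exists it is unique, giving uniqueness of $\vartheta$ and hence of $H$.

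The main obstacle, and the only genuinely delicate point, is the behavior of $F$ at the left endpoint $\vartheta \searrow 2\max_{[p_0,0]}\Gamma$ and the justification that $L$ is the correct supremum of the range. One must argue that $F$ is \emph{monotone} up to this endpoint (so that the limit $L$ exists in $(0,\infty]$ and equals the supremum), and that no value of $d$ in $(0,L)$ is missed. Both follow from monotonicity and continuity of $F$ together with the monotone convergence theorem for the increasing-as-$\vartheta$-decreases integrand, but care is needed because the limiting integrand $(2\max\Gamma - 2\Gamma(s))^{-1/2}$ may be non-integrable (when $L = \infty$) or integrable (when $L < \infty$) depending on how $\Gamma$ approaches its maximum. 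Once the range of $F$ is identified as $(0,L)$, the equivalence with~\eqref{COND1Thm}, the uniqueness of $\vartheta$, and the resulting uniqueness and regularity of $H$ (the latter because $a = (\vartheta - 2\Gamma)^{1/2} \in \mathrm{C}^{1+\beta}$ stays bounded away from zero, so $H' = a^{-1} \in \mathrm{C}^{1+\beta}$ and thus $H \in \mathrm{C}^{2+\beta}$) all follow directly.
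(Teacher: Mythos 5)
Your proposal is correct and follows essentially the same route as the paper: both reduce the ODE to the first integral $H'^{-2}=\vartheta-2\Gamma$, read off formula \eqref{eq:H}, and then use strict monotonicity of $\vartheta\mapsto\int_{p_0}^0(\vartheta-2\Gamma(s))^{-1/2}\,{\rm d}s$ to show that \eqref{COND1} has a (unique) admissible solution exactly when \eqref{COND1Thm} holds. You merely spell out in more detail the endpoint limits that the paper summarizes as ``the monotonicity of the integrand.''
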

\begin{proof}
Since \eqref{LFS}$_1$ is equivalent to
\[
\Big(\frac{1}{H'^2}+2\Gamma\Big)'=0,
\]
we obtain that 
\[
\frac{1}{H'^2(p)}=\vartheta-2\Gamma(p),\qquad p\in[p_0,0],
\]
where the constant $\vartheta$ needs to satisfy $\vartheta>2\max_{[p_0,0]}\G$.  
From the latter relation we infer that~$H$ is given by \eqref{eq:H} and solves \eqref{LFS} iff $\vartheta$ is the solution to \eqref{COND1}.
In view of the monotonicity of the integrand in \eqref{COND1} with respect to $\vartheta$,
the existence of the (unique) solution to \eqref{COND1}  is equivalent to \eqref{COND1Thm}.
\end{proof}

\subsection{Bifurcation analysis for \eqref{PBF}}\label{Sec:42}
In the following we assume that \eqref{COND1Thm} is satisfied and we denote by $H$ the laminar flow solution identified in Lemma~\ref{L:0}.
We next define the Banach spaces $\bY$ and $\bZ_1\times\bZ_2$ consisting of $1$-periodic functions with respect to the variable~$q$ by setting
\begin{align*}
\bY:=\{h\in\bX\, :\,\text{$ h=0$ on ${p=p_0}$}\},\qquad\bZ_1:=\{h\in{\rm C}^\beta (\ov \0)\,:\, \text{$h$ is even}\},\qquad \bZ_2:={\rm C}_{e,0}^{2+\beta}(\R),
\end{align*}
and we denote by $\cO$ the open subset of $\bY$ defined by
\[
\cO:=\{h\in\bY\,:\, h_p+H'>0 \quad\text{in $\ov{\0}$}\}.
\]
We further  introduce the operator $\cF:=(\cF_1,\cF_2):(0,\infty)\times \cO\subset \R\times \bY\to\bZ_1\times\bZ_2$ by  
\begin{equation}
\begin{aligned}
\cF_1(\lambda,h)&:=(\lambda^2+h_q^2)(H''+h_{pp})-2(H'+h_p)h_qh_{pq}+(H'^2+h_p^2)h_{qq}-\lambda^2\gamma (H'+h_p)^3,\\[1ex]
\cF_2(\lambda,h)&:=\tr_0h-\psi(\lambda,h+H).
\end{aligned}
\end{equation} 
Hence, the problem \eqref{PBF} is equivalent to the nonlinear and nonlocal equation
\begin{equation}\label{BifE}
\cF(\lambda,h)=0,
\end{equation}
where 
\begin{equation}\label{refF}
\cF\in{\rm C}^\infty((0,\infty)\times\cO,\bZ_1\times\bZ_2)
\end{equation}
has the property that 
\begin{equation}\label{lamflsol}
\cF(\lambda,0)=0\qquad\text{for all $\lambda>0$.}
\end{equation}

Our goal is to apply the Crandall--Rabinowitz theorem  \cite[Theorem~1.7]{CR71} on bifurcation from simple eigenvalues in the context of \eqref{BifE}  in order to determine 
 new solutions to \eqref{BifE} which are also $q$-dependent.
For this reason we  shall determine  $\lambda_*>0$ with the property that  the partial Fr\'echet derivative $\partial_h\cF(\lambda_*,0)$ is a Fredholm operator of index zero with a one-dimensional kernel.

 Given $\lambda>0$, the   partial Fr\'echet derivative $\partial_h\cF(\lambda ,0):=(L,T)$ is given by
 \begin{equation}\label{derf}
 \begin{aligned}
 L[h]&:=\lambda^2h_{pp}+H'^2h_{qq}-3\lambda^2\gamma H'^2h_p,\\
 T[h]&:=\tr_0 h-(1-\p_q^2)^{-1}\Big[\tr_0h-\frac{\lambda^4}{\alpha}\Big(S[h]-\int_0^1S[h]\, {\rm d} q\Big)\Big]
 \end{aligned}
 \end{equation}
for $h\in\bY$, where
\[
S[h](q):=\int_0^q\int_0^x\Big[\tr_0\Big(\frac{h_p}{H'^3}- gh\Big) -\int_0^1\tr_0\frac{h_p}{H'^3}\,{\rm d}q\Big]\, {\rm d}s\,{\rm d}x,\qquad q\in\R.
\]
\begin{lemma}\label{L:2}
Given $\lambda>0$, the Fr\'echet derivative $\p_h\cF(\lambda,0)\in \kL(\bY, \bZ_1\times\bZ_2)$ is a Fredholm operator of index zero.
\end{lemma}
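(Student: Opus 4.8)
The plan is to establish the Fredholm property of $\p_h\cF(\lambda,0)=(L,T)$ by decomposing it into a dominant part whose Fredholm index is known and a remainder that is compact. The natural candidate for the dominant part is the pair $(L,\tr_0)$, where $L$ is the second-order elliptic operator appearing in \eqref{derf} and $\tr_0$ is the plain Dirichlet trace on $\{p=0\}$. First I would verify that $L$ is uniformly elliptic on $\ov\0$: its principal symbol is $\lambda^2\xi_p^2+H'^2\xi_q^2$, and since $H'>0$ on the compact interval $[p_0,0]$ by Lemma~\ref{L:0}, both coefficients are bounded away from zero, while the lower-order coefficient $-3\lambda^2\gamma H'^2$ lies in ${\rm C}^\beta$ because $\gamma\in{\rm C}^\beta$ and $H'\in{\rm C}^{1+\beta}$. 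The operator $L$ together with the Dirichlet condition on $p=p_0$ (built into the space $\bY$) and the Dirichlet trace on $p=0$ forms a classical elliptic boundary value problem on the strip.

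The key observation is that $T[h]-\tr_0 h$ is a \emph{compact} perturbation of $\tr_0 h$. Indeed, from \eqref{derf} this difference equals
\[
-(1-\p_q^2)^{-1}\Big[\tr_0 h-\frac{\lambda^4}{\alpha}\Big(S[h]-\int_0^1 S[h]\,{\rm d}q\Big)\Big],
\]
and both $\tr_0h$ and $S[h]$ are expressed through the trace $\tr_0h$ and $\tr_0 h_p$, which depend on $h\in\bY\subset{\rm C}^{2+\beta}$ through bounded trace maps into ${\rm C}^{2+\beta}_e(\R)$ and ${\rm C}^{1+\beta}_e(\R)$ respectively. Composing with the smoothing resolvent $(1-\p_q^2)^{-1}\colon{\rm C}^{s}_{e,0}\to{\rm C}^{s+2}_{e,0}$ and using the compact embedding ${\rm C}^{s+2}_e(\R)\hookrightarrow{\rm C}^{s}_e(\R)$ (Arzel\`a--Ascoli on one period, together with the double integrations defining $S[h]$ which only increase regularity), I would show that $h\mapsto T[h]-\tr_0 h$ maps $\bY$ compactly into $\bZ_2={\rm C}^{2+\beta}_{e,0}(\R)$.

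Granting these two steps, the argument concludes as follows. The operator $(L,\tr_0)\colon\bY\to\bZ_1\times\bZ_2$ is an isomorphism: for any right-hand side $(f,g)\in\bZ_1\times\bZ_2$ the Dirichlet problem $L[h]=f$ in $\0$, $\tr_0 h=g$ on $p=0$, $h=0$ on $p=p_0$ has a unique solution $h\in{\rm C}^{2+\beta}(\ov\0)$ by Schauder theory for the periodic strip, and evenness and the zero-mean normalization are preserved by the symmetry of the data, so $h\in\bY$. In particular this map is Fredholm of index zero. Since $\p_h\cF(\lambda,0)=(L,\tr_0)+(0,\,T-\tr_0)$ differs from an index-zero Fredholm operator by a compact operator, the stability of the Fredholm index under compact perturbations (cf.\ the standard theory) yields that $\p_h\cF(\lambda,0)$ is Fredholm of index zero as well.

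The main obstacle I anticipate is the compactness claim for $T-\tr_0$: one must track carefully that the nonlocal terms $S[h]$ and the integral averages genuinely gain regularity (through the two antiderivatives in the definition of $S$ and through the resolvent $(1-\p_q^2)^{-1}$) so that the target regularity ${\rm C}^{2+\beta}$ is reached with room to spare for a compact embedding, rather than merely bounded. Verifying that each constituent trace and integral operator lands in a space that embeds compactly into $\bZ_2$, and that no derivative in $p$ of too high an order is hidden in $S[h]$, is where the care is needed; the ellipticity and isomorphism parts are routine Schauder theory.
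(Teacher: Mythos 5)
Your argument is correct and follows essentially the same route as the paper: both exhibit $\p_h\cF(\lambda,0)$ as a compact perturbation of an invertible elliptic Dirichlet problem on the periodic strip and invoke the stability of the Fredholm index under compact perturbations. The only (immaterial) difference is that the paper places the first-order term $-3\lambda^2\gamma H'^2h_p$ into the compact part (since $h\mapsto h_p$ loses one derivative and ${\rm C}^{1+\beta}\hookrightarrow{\rm C}^{\beta}$ is compact) and takes the pure principal part $(\lambda^2\p_p^2+H'^2\p_q^2,\tr_0)$ as the isomorphism via \cite[Theorem~6.14]{GT01}, whereas you keep that term inside the isomorphism, which also works because the zeroth-order coefficient vanishes and the maximum principle still gives uniqueness.
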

\begin{proof}
In view of \cite[Theorem 6.14]{GT01}, the operator $$(\lambda^2\p_p^2+H'^2\p_q^2,\tr):{\rm C}^{2+\beta}(\ov \0)\to {\rm C}^{\beta}(\ov \0)\times {\rm C}^{2+\beta}(\R)^2$$ is an isomorphism.
We may infer from this property that $(\lambda^2\p_p^2+H'^2\p_q^2,\tr_0):\bY\to\bZ_1\times\bZ_2$ is an isomorphism too.
Since 
\[
\Big[h\mapsto \Big(-3\lambda^2\gamma H'^2h_p, -(1-\p_q^2)^{-1}\Big[\tr_0h-\frac{\lambda^4}{\alpha}\Big(S[h]-\int_0^1S[h]\, {\rm d} q\Big)\Big]\Big)\Big]:\bY\to\bZ_1\times\bZ_2
\]
is a compact operator, the desired claim for $\partial_h\cF(\lambda ,0) $ follows at once.
\end{proof}

The next lemma characterizes the functions that belong to the kernel of $\partial_h\cF(\lambda,0).$

\begin{lemma}\label{L:ib}
Assume that \eqref{COND1Thm} is satisfied and set
\begin{equation}\label{eqa}
a:=1/H',
\end{equation}
where $H$ is the unique solution to \eqref{LFS}.
  Then, given  $\lambda>0$,  $h\in\bY$ satisfies $\partial_h\cF(\lambda,0)[h] = 0$ iff $h_0=0$ and for all $1\leq k\in\N$ we have
\begin{equation*}
\left.
\arraycolsep=1.4pt
\begin{array}{rllllll}
	\lambda^2 (a^3h_k')'  -(2k\pi)^2 a h_k &=& 0 \quad \text{in ${\rm C}^\beta([p_0,0])$},\\[1ex]
	\big( g\lambda^4    +\alpha(2k\pi)^4 \big)h_k(0) &=& \lambda^4a^3(0)h_k'(0),\\[1ex]
	h_k(p_0)&=&0,
\end{array}
\right\}
\end{equation*}
where the function $h_k\in {\rm C}^{2+\beta}([p_0,0])$, $k\in\N$,  is defined by
\begin{align}\label{hk}
	h_k(p) := \int_0^1 h(q,p) \cos(2k\pi q) \,dq,\qquad p\in[p_0,0].
\end{align}
\end{lemma}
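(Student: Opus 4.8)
The plan is to decouple the equation $\p_h\cF(\lambda,0)[h]=(L[h],T[h])=0$ into its Fourier modes with respect to $q$. Since every $h\in\bY$ is even and $1$-periodic in $q$, it is determined by the coefficients $h_k$ from \eqref{hk}, and a continuous even $1$-periodic function vanishes precisely when all of its cosine coefficients $\int_0^1(\cdot)\cos(2k\pi q)\,{\rm d}q$ do. Hence $L[h]=0$ in $\bZ_1$ is equivalent to demanding, for each fixed $p$ and each $k\in\N$, that $\int_0^1 L[h](\cdot,p)\cos(2k\pi q)\,{\rm d}q$ vanish, and similarly for the identity $T[h]=0$ in $\bZ_2$. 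I would also record at the outset that membership $h\in\bY$ already encodes the two constraints $h_k(p_0)=0$ for all $k$ (from $h=0$ on $p=p_0$) and $h_0(0)=\int_0^1\tr_0 h\,{\rm d}q=0$ (the zero-mean condition defining $\bX$).

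For the interior operator, differentiating \eqref{hk} under the integral sign and integrating the $h_{qq}$-term by parts twice (the boundary terms cancelling by periodicity) gives
\[
\int_0^1 L[h](\cdot,p)\cos(2k\pi q)\,{\rm d}q=\lambda^2 h_k''-3\lambda^2\gamma H'^2 h_k'-(2k\pi)^2 H'^2 h_k .
\]
Using $a=1/H'$ together with $H''=\gamma H'^3$ from \eqref{LFS}, one gets $a'=-\gamma H'$, whence $H'^3\big[\lambda^2(a^3h_k')'-(2k\pi)^2 a h_k\big]$ equals the right-hand side above. Since $H'>0$, the vanishing of this functional for $k\geq1$ is exactly the first ODE in the claimed system, and for $k=0$ it reduces to $(a^3h_0')'=0$.

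For the boundary operator I would first invoke that $1-\p_q^2$ is an isomorphism on the relevant even, zero-mean Hölder spaces to conclude that $T[h]=0$ is equivalent to
\[
\p_q^2\tr_0 h=\frac{\lambda^4}{\alpha}\Big(S[h]-\int_0^1 S[h]\,{\rm d}q\Big).
\]
The key computation is the cosine projection of the nonlocal term $S[h]$. Writing $G:=\tr_0\big(h_p/H'^3-gh\big)-\int_0^1\tr_0(h_p/H'^3)\,{\rm d}q$, one checks that $G$ is even with zero mean (the latter using $\int_0^1\tr_0 h\,{\rm d}q=0$) and that $\p_q^2 S[h]=G$ with $S[h](0)=\p_q S[h](0)=0$. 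Two integrations by parts, in which every boundary term vanishes precisely because $G$ and its first antiderivative have zero mean, then yield $\int_0^1 S[h]\cos(2k\pi q)\,{\rm d}q=-(2k\pi)^{-2}\int_0^1 G\cos(2k\pi q)\,{\rm d}q$ for $k\geq1$, and this last integral equals $a^3(0)h_k'(0)-gh_k(0)$. Projecting the displayed identity onto $\cos(2k\pi q)$ and simplifying gives, for $k\geq1$, exactly the boundary relation $(g\lambda^4+\alpha(2k\pi)^4)h_k(0)=\lambda^4a^3(0)h_k'(0)$, while the $k=0$ projection is the trivial identity $0=0$.

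It remains to treat the zeroth mode. From $(a^3h_0')'=0$ one has $h_0'=cH'^3$ for a constant $c$; the constraint $h_0(p_0)=0$ forces $h_0(p)=c\int_{p_0}^pH'^3\,{\rm d}s$, and the zero-mean condition $h_0(0)=0$ then gives $c=0$ because $\int_{p_0}^0 H'^3\,{\rm d}s>0$. Hence $h_0\equiv0$, which in the converse direction also makes the $k=0$ parts of $L[h]=0$ and $T[h]=0$ automatic. Assembling the modewise equivalences, and using completeness of the cosine system to pass back from the vanishing of all coefficients to $L[h]=0$ and $T[h]=0$, yields the stated characterization. I expect the most delicate step to be the Fourier projection of the nonlocal quantity $S[h]$: one must justify the equivalence between $T[h]=0$ and the second-order identity through the isomorphism $1-\p_q^2$, and track the boundary terms in the double integration by parts, all of which rest on the zero-mean normalization built into $\bX$.
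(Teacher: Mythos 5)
Your proposal is correct and follows essentially the same route as the paper: projecting $L[h]=0$ and $T[h]=0$ onto the cosine modes $\cos(2k\pi q)$, rewriting the interior equation in Sturm--Liouville form via $a=1/H'$ and $\gamma=-aa'$, and eliminating the zeroth mode through $(a^3h_0')'=0$ together with $h_0(p_0)=h_0(0)=0$. The only difference is that you spell out details the paper leaves implicit, namely the double integration by parts for the projection of the nonlocal term $S[h]$ (where the boundary terms vanish because $G$ has zero mean, hence $S'$ is periodic) and the converse direction via completeness of the cosine system.
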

\begin{proof}
Let  $h\in\bY$ satisfy $\partial_h\cF(\lambda,0)[h] = 0$. 
Then, the relation $L [h]=0$ is equivalent to
\begin{align*}
	\lambda^2 h_k''-3\lambda^2\gamma H'^2h_k' -(2k\pi)^2H'^2h_k = 0 \qquad \text{in ${\rm C}^\beta([p_0,0])$ for all $k\in\N$}.
\end{align*}
The latter identity is obtained by multiplying the equation $L[h]=0$ by $\cos(2k\pi q)$, followed by integration on~${[p_0,0]}.$
Since  $a$ is positive  and $\gamma=-aa',$ cf.~\eqref{LFS}$_1$, we may reformulate the latter equation as
\begin{align*}
	\lambda^2 (a^3h_k')'  -(2k\pi)^2 a h_k = 0 \qquad \text{in ${\rm C}^\beta([p_0,0])$ for all $k\in\N$}.
\end{align*}
Furthermore, since $h\in\bY$ we have
\begin{align*}
	h_0(0) = 0,
\end{align*}
while, arguing similarly as above,  the relation $T[h]=0$ is equivalent to
\begin{align*}
	\big( g\lambda^4    + \alpha(2k\pi)^4 \big)h_k(0) = \lambda^4 a^3(0) h_k'(0)\qquad\text{for all $k\ge 1$}.
\end{align*}

Finally, since each $h\in\bY$ vanishes on the boundary $p=p_0,$ it holds that 
\[
h_k(p_0)=0\qquad\text{for all $k\in\N$.}
\]

Noticing that the function $h_0\in {\rm C}^{2+\beta}([p_0,0])$ solves  the boundary value problem
\[
(a^3h_0')' = 0 \quad \text{in $[p_0,0]$},\qquad h_0(0)=h_0(p_0)=0,
\]
it is straightforward to conclude  that actually $h_0=0$.
This proves the claim.
\end{proof}

In  Lemma~\ref{L:ib} we have shown that a function  $h\in\bY$ with $h_0=0$ solves~${\partial_h\cF(\lambda,0)[h] = 0}$ iff for all~${k\geq1}$ the 
function~$h_k$ defined in \eqref{hk} is a solution to the Sturm-Liouville problem
\begin{equation}\label{BVPk}
\left.
\arraycolsep=1.4pt
\begin{array}{rllllll}
\lambda^2 (a^3 f')' -\mu a f &=&0\quad\text{in $[p_0,0]$,}\\[1ex]
( g\lambda^4 + \alpha\mu^2 )f(0) &=&\lambda^4 a^3(0) f'(0), \\[1ex]
f(p_0)&=&0 
\end{array}
\right\}
\end{equation}
with $\mu:=(2k\pi)^2$.
We next determine $\lambda_*>0$ such that   \eqref{BVPk} has a nontrivial solution for~${\mu=(2\pi)^2}$  and only the trivial solution $f=0$ when $\mu>(2\pi)^2$.
As a first step we show that the solutions to \eqref{BVPk} build a vector space of dimension less or equal to $1$ for each choice of the parameters $\lambda>0$ and~${\mu\in\R}$. 
To this end we define the   Sturm--Liouville type operator $R_{\lambda,\mu}:{\rm C}^{2+\beta}_0([p_0,0])\to {\rm C}^{\beta}([p_0,0])\times\R$ by 
\begin{align}\label{SLop}
R_{\lambda,\mu}[f]:=\begin{pmatrix}
\lambda^2 (a^3 f')' -\mu a f \\[1ex]
  \lambda^4 a^3(0) f'(0)-( g\lambda^4    + \alpha\mu^2 )f(0)
\end{pmatrix},
\end{align}
where ${\rm C}^{2+\beta}_0([p_0,0]):=\{f\in{\rm C}^{2+\beta} ([p_0,0])\,:\, f(p_0)=0\}.$
Let further~${f_1,\, f_2\in{\rm C}^{2+\beta}_0([p_0,0])}$ denote the   solutions to the initial value problems
\begin{equation}\label{SYS1}
\left.
\begin{array}{lll}
\lambda^2 (a^3 f_1')' -\mu a f_1 = 0\qquad \text{in $[p_0,0]$},\\[1ex]
 f_1(p_0)=0,\quad  f_1'(p_0)=1,
\end{array}
\right\}
\end{equation}
and 
\begin{equation}\label{SYS2}
\left.
\begin{array}{lll}
\lambda^2 (a^3 f_2')' -\mu a f_2 = 0\qquad \text{in $[p_0,0]$},\\[1ex]
f_2(0)=   \lambda^4 a^3(0),\quad f_2'(0)=  g\lambda^4    +\alpha\mu^2.
\end{array}
\right\}
\end{equation}

\begin{lemma}\label{L:3}
Given $\lambda>0$  and $\mu\in\R$, the operator~$R_{\lambda,\mu}$  
is a Fredholm operator of index zero with ${\rm dim} \ker R_{\lambda,\mu}\le 1$.
Additionally,   ${\rm dim} \ker R_{\lambda,\mu}= 1$ iff  the functions~$f_1$ and $f_2$    
are linearly dependent. In this case  we have $\ker R_{\lambda,\mu}={\rm span}\{f_1\}.$
\end{lemma}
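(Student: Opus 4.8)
The plan is to analyze $R_{\lambda,\mu}$ as a compact perturbation of an isomorphism and then reduce the kernel computation to an ODE uniqueness argument. First I would establish the Fredholm property: the principal part $f\mapsto(\lambda^2(a^3f')',\lambda^4a^3(0)f'(0))$ together with the boundary constraint $f(p_0)=0$ built into the domain ${\rm C}^{2+\beta}_0([p_0,0])$ defines a second-order elliptic (in one dimension, regular Sturm--Liouville) boundary value problem with Robin-type data at $p=0$; since $a$ is positive and of class ${\rm C}^{1+\beta}$, standard Schauder theory (as invoked already in the proof of Lemma~\ref{L:2}) shows this leading operator is an isomorphism onto ${\rm C}^{\beta}([p_0,0])\times\R$. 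The remaining terms, namely $f\mapsto(-\mu af,-(g\lambda^4+\alpha\mu^2)f(0))$, factor through the compact embeddings ${\rm C}^{2+\beta}_0\hookrightarrow{\rm C}^{\beta}$ and the bounded evaluation $f\mapsto f(0)$, hence define a compact operator. Therefore $R_{\lambda,\mu}$ is a compact perturbation of an isomorphism and so is Fredholm of index zero.

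For the kernel bound I would argue directly from the ODE. Any $f\in\ker R_{\lambda,\mu}$ solves the linear second-order equation $\lambda^2(a^3f')'-\mu af=0$ in $[p_0,0]$ subject to the single initial condition $f(p_0)=0$. If in addition $f'(p_0)=0$, then by uniqueness for the initial value problem (the coefficient $a^3$ is positive and ${\rm C}^{1+\beta}$, so the equation can be written in standard form with Lipschitz coefficients) we get $f\equiv0$. Consequently the map $f\mapsto f'(p_0)$ is injective on $\ker R_{\lambda,\mu}$, which shows ${\rm dim}\ker R_{\lambda,\mu}\le1$.

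It remains to characterize when the kernel is exactly one-dimensional and to identify it. By definition $f_1$ is the unique solution to the ODE with $f_1(p_0)=0$, $f_1'(p_0)=1$, so every solution of the ODE satisfying $f(p_0)=0$ is a scalar multiple of $f_1$; thus $\ker R_{\lambda,\mu}\subseteq{\rm span}\{f_1\}$, and $\ker R_{\lambda,\mu}={\rm span}\{f_1\}$ precisely when $f_1$ itself satisfies the boundary condition at $p=0$, i.e.\ $\lambda^4a^3(0)f_1'(0)=(g\lambda^4+\alpha\mu^2)f_1(0)$. The function $f_2$ is constructed so that it solves the same ODE while satisfying the boundary data $\big(f_2(0),f_2'(0)\big)=\big(\lambda^4a^3(0),\,g\lambda^4+\alpha\mu^2\big)$, so the boundary condition on $f_1$ reads exactly that the vector $\big(f_1(0),f_1'(0)\big)$ is proportional to $\big(f_2(0),f_2'(0)\big)$. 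Since both $f_1$ and $f_2$ solve the same second-order linear ODE, proportionality of their Cauchy data at $p=0$ is equivalent to $f_1$ and $f_2$ being linearly dependent (equivalently, the vanishing of their Wronskian, which is constant up to the positive factor $a^3$). This gives the stated equivalence and the identification $\ker R_{\lambda,\mu}={\rm span}\{f_1\}$.

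I expect the only mildly delicate point to be confirming that the leading boundary value problem is genuinely an isomorphism rather than merely Fredholm; the homogeneous Robin condition $\lambda^4a^3(0)f'(0)=0$ forces $f'(0)=0$, and combined with $f(p_0)=0$ and the sign of $a$ an integration-by-parts (energy) argument rules out a nontrivial kernel, after which index zero closes the surjectivity. Everything else is bookkeeping with the explicitly defined solutions $f_1,f_2$.
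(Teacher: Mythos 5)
Your proposal is correct and follows essentially the same route as the paper: the Fredholm property via a compact perturbation of the isomorphism $f\mapsto(\lambda^2(a^3f')',\lambda^4a^3(0)f'(0))$ on ${\rm C}^{2+\beta}_0([p_0,0])$, and the kernel analysis via the constancy of $a^3$ times the Wronskian (your initial-value-problem uniqueness argument is just an equivalent phrasing of the paper's observation that $f'\wt f=f\wt f'$ for two kernel elements). The only cosmetic difference is that you verify the leading operator is an isomorphism explicitly, where the paper declares it obvious.
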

\begin{proof}
Since $[f\mapsto (\lambda^2 (a^3 f')',  \lambda^4 a^3(0) f'(0))]:{\rm C}^{2+\beta}_0([p_0,0])\to {\rm C}^{\beta}([p_0,0])\times\R$    is obviously an isomorphism
and~$R_{\lambda,\mu}$ is a compact perturbation of this operator, it follows that
  $R_{\lambda,\mu} $ is indeed a Fredholm operator of index zero.
Moreover, if $f,\,\wt f\in \ker R_{\lambda,\mu}$, then $ f'\wt f=f\wt f'$, which shows that ${\rm dim} \ker R_{\lambda,\mu}\leq 1$.

Let  ${\rm dim} \ker R_{\lambda,\mu}= 1$ and let $0\neq f\in  \ker R_{\lambda,\mu}$.  
Then, since~$a^3(ff_i'-f'f_i)$, $i=1,\, 2,$  is a  constant function, the initial conditions in  \eqref{SYS1}-\eqref{SYS2} 
ensure that this function is in fact identically zero, hence $f_1$ and $f_2$ are linearly dependent.
Viceversa, if $f_1$ and $f_2$ are linearly dependent, then they both belong to $ \ker R_{\lambda,\mu},$ and this completes the proof.
\end{proof}

In view of Lemma~\ref{L:3} it remains to look for a value  $\lambda_*>0$ with the property that the Wronskian~${f_1f_2'-f_1'f_2}$ vanishes in $[p_0,0]$ only for $\mu=(2\pi)^2$.  
Since $a^3(f_1f_2'-f_1'f_2)$ is a constant function in $[p_0,0]$ and $a\neq 0,$ the Wronskian 
vanishes in $[p_0,0]$ iff it vanishes at the point $p=0$.
Therefore we consider the function $W:(0,\infty)\times\R\to\R$ given by
\[
W(\lambda,\mu)=f_1(0)f_2'(0)-f_1'(0)f_2(0)=(g\lambda^4    + \alpha\mu^2)f_1(0)-\lambda^4 a^3(0)f_1'(0).
\]
Observing that   \eqref{SYS1}$_1$ depends smoothly on $\mu$ and $\lambda$, this property is inherited also by the solution to~\eqref{SYS1}, cf. e.g.~\cite{Am90}, and therefore we have $W\in{\rm C}^\infty((0,\infty)\times\R).$

For the special value $\mu=0$ we have
\[
W(\lambda,0)=\lambda^4(g   f_1(0)- a^3(0)f_1'(0)).
\]
In view of \eqref{SYS1}  we compute, in the particular case $\mu=0$, that 
\[
f_1'(0)=\frac{a^3(p_0)}{a^3(0)}\qquad\text{and}\qquad f_1(0)=\int_{p_0}^0\frac{a^3(p_0)}{a^3(p)}{\rm d}p,
\]
which leads to
\begin{equation}\label{wloo}
W(\lambda,0)=\lambda^4a^3(p_0)\Big(g   \int_{p_0}^0 \frac{1}{a^3(p)}{\rm d}p- 1\Big).
\end{equation}
Hence, if 
\begin{equation}\label{COND2}
g  \int_{p_0}^0\frac{1}{a^3(p)}{\rm d}p< 1,
\end{equation}
then  
\begin{equation}\label{Wl0}
W(\lambda,0)<0\qquad\text{for all $\lambda>0$.}
\end{equation}

We next investigate the behavior of $W(\lambda,\mu)$ when $\mu\to\infty$ .
\begin{lemma}\label{L:4}
Given $\lambda>0$, it holds that $\underset{\mu\to\infty}\lim W(\lambda,\mu)=\infty.$
\end{lemma}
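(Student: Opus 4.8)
The plan is to work directly with the explicit formula
\[
W(\lambda,\mu)=(g\lambda^4+\alpha\mu^2)f_1(0)-\lambda^4a^3(0)f_1'(0)
\]
and to show that for large $\mu$ the growing coefficient $\alpha\mu^2$ multiplying $f_1(0)$ dominates the subtracted term. Since we only care about $\mu\to\infty$, we may assume $\mu>0$, so that the source term in \eqref{SYS1} has a favorable sign.

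First I would establish that $f_1>0$ and $f_1'>0$ on $(p_0,0]$. Writing $g_1:=a^3f_1'$, the equation \eqref{SYS1}$_1$ reads $g_1'=(\mu/\lambda^2)af_1$ with $g_1(p_0)=a^3(p_0)>0$ and $f_1(p_0)=0$. A standard first-zero argument shows that $f_1$ cannot vanish on $(p_0,0]$: as long as $f_1>0$ the function $g_1$ is increasing, hence $g_1\ge a^3(p_0)>0$, which forces $f_1'>0$ and keeps $f_1$ strictly increasing, ruling out a further zero. In particular $f_1'\ge a^3(p_0)/\max_{[p_0,0]}a^3=:c_0>0$ on $[p_0,0]$, and integrating yields the $\mu$-independent lower bound $f_1(0)\ge c_0(-p_0)>0$.

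Next I would control $f_1'(0)$. Integrating \eqref{SYS1}$_1$ over $[p_0,0]$ gives the identity $a^3(0)f_1'(0)=a^3(p_0)+(\mu/\lambda^2)\int_{p_0}^0 af_1\,{\rm d}p$, and since $0<f_1\le f_1(0)$ by monotonicity, $\int_{p_0}^0 af_1\,{\rm d}p\le(\max_{[p_0,0]}a)(-p_0)f_1(0)$. Substituting into $W$ then produces the lower bound
\[
W(\lambda,\mu)\ge f_1(0)\big[g\lambda^4+\alpha\mu^2-\lambda^2(\textstyle\max_{[p_0,0]}a)(-p_0)\mu\big]-\lambda^4a^3(p_0).
\]
For $\mu$ large the bracket is positive, and combining this with $f_1(0)\ge c_0(-p_0)>0$ one reads off at once that $W(\lambda,\mu)\to\infty$ as $\mu\to\infty$.

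The only genuinely delicate point is that both $f_1(0)$ and $f_1'(0)$ themselves blow up as $\mu\to\infty$ (in fact exponentially in $\sqrt\mu$), so a naive ``$f_1(0)$ bounded below, $f_1'(0)$ bounded above'' argument is hopeless. The mechanism that saves the day is the comparison of growth rates hidden in the integral identity, namely $\lambda^4a^3(0)f_1'(0)=O(\mu)\,f_1(0)+O(1)$: the subtracted term grows only like $\mu$ relative to $f_1(0)$, whereas the leading contribution grows like $\mu^2$, so their difference is eventually positive and diverges. Everything else is elementary and relies only on the positivity and continuity of $a$ on the compact interval $[p_0,0]$.
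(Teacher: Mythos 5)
Your proof is correct and follows essentially the same route as the paper: positivity and monotonicity of $f_1$ via the first-integral $a^3f_1'$, a $\mu$-independent lower bound on $f_1(0)$, and the integrated identity $a^3(0)f_1'(0)=a^3(p_0)+(\mu/\lambda^2)\int_{p_0}^0af_1\,{\rm d}p$ to show the subtracted term grows only like $\mu\,f_1(0)$ against the $\alpha\mu^2 f_1(0)$ leading term. The only cosmetic differences are your choice of the constant $c_0=a^3(p_0)/\max a^3$ in place of the paper's $(m/M)^3$ and retaining $a^3(p_0)$ where the paper bounds it by $M^3$.
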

\begin{proof}
Let  $m,\, M\in(0,\infty)$ be defined as $m:=\min_{[p_0,0]} a$ and $M:=\max_{[p_0,0]}a$.
We note that, since~${f_1'(p_0)=1}$ and~${f_1(p_0)=0}$,  there exists $\ov p\in(0,p_0]$ such that $f_1(p)>0$ in~${(p_0,\ov p).}$
This property together with  \eqref{SYS1}$_1$  implies that   $a^3f_1'$ is a non-decreasing function in $(p_0,\ov p)$ and that~${f_1'(p)\geq (m/M)^3}$ for all $p\in[p_0,\ov p]$. 
Consequently, by the fundamental theorem of calculus, $f_1(\ov p)\geq (\ov p-p_0) (m/M)^3>0$ and therefore we may actually chose~${\ov p=0}$. 
Thus,      $f_1$ is an  increasing function in $[p_0,0]$   and $f_1(0)\geq |p_0|(m/M)^3.$

Integrating  the first equation of \eqref{SYS1} over $[p_0,0]$, we have
\[
a^3(0)f_1'(0)=a^3(p_0)+\frac{\mu}{\lambda^2}\int_{p_0}^0a(p)f_1(p)\,{\rm d}p\leq M^3+\frac{M|p_0|\mu}{\lambda^2} f_1(0)\qquad\text{for all  $\lambda,\, \mu\in (0,\infty)$.} 
\] 
This estimate together with  the definition of $W(\lambda,\mu)$ and the relation $f_1(0)\geq |p_0|(m/M)^3$     implies that 
\[
W(\lambda,\mu)\geq   (\alpha\mu^2- \lambda^2 M|p_0|\mu  ) f_1(0)-\lambda^4 M^3\underset{\mu\to\infty}\longrightarrow\infty,
\]
and the desired claim follows. 
\end{proof}

From now on, we assume that \eqref{COND2} is satisfied.
In view of Lemma~\ref{L:4}, for each given~${\lambda>0}$, the function $W(\lambda,\cdot)$ has at least a positive zero. 
We next investigate the partial derivatives $W_\lambda(\lambda,\mu)$ and $W_\mu(\lambda,\mu)$ in all points  $(\lambda,\mu)$ having the property that~${W(\lambda,\mu)=0.}$

\begin{lemma}\label{L:5} 
Let   $\lambda,\, \mu\in (0,\infty)$ be given such that $W(\lambda,\mu)=0$. 
We then have:
\begin{itemize}
\item[(i)]  $W_\lambda(\lambda,\mu)<0$;
\item[(ii)] $W_\mu(\lambda,\mu)>0$ and 
\begin{equation}\label{wonder}
\frac{W_\lambda(\lambda,\mu)}{W_\mu(\lambda,\mu)}=-\frac{2\mu}{\lambda}.
\end{equation}
\end{itemize}
\end{lemma}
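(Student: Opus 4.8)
The plan is to first extract the relation~\eqref{wonder} from a scaling identity, thereby reducing both assertions to a single sign. The key observation is that the solution $f_1$ of the initial value problem~\eqref{SYS1} depends on $(\lambda,\mu)$ only through the combination $\nu:=\mu/\lambda^2$: dividing~\eqref{SYS1}$_1$ by $\lambda^2$ turns the equation into $(a^3f_1')'=\nu a f_1$, and the Cauchy data at $p_0$ carry no parameter. Consequently $f_1(0)$ and $f_1'(0)$ are invariant under $(\lambda,\mu)\mapsto(t\lambda,t^2\mu)$ (which preserves $\nu$), and the explicit formula $W=(g\lambda^4+\alpha\mu^2)f_1(0)-\lambda^4a^3(0)f_1'(0)$ yields the weighted homogeneity $W(t\lambda,t^2\mu)=t^4W(\lambda,\mu)$. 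Differentiating in $t$ at $t=1$ gives the Euler identity $\lambda W_\lambda+2\mu W_\mu=4W$; evaluated at a zero of $W$ this is exactly~\eqref{wonder}. Since $\lambda,\mu>0$, the two partial derivatives then have strictly opposite signs as soon as either is nonzero, so it suffices to prove $W_\mu>0$, whence $W_\lambda<0$ follows automatically.

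Next I would compute $W_\mu$ at a zero. Differentiating~\eqref{SYS1} in $\mu$, the function $\dot f:=\partial_\mu f_1$ solves $\lambda^2(a^3\dot f')'-\mu a\dot f=af_1$ with vanishing Cauchy data at $p_0$. Applying Green's (Lagrange's) identity for the formally self-adjoint operator $\phi\mapsto\lambda^2(a^3\phi')'-\mu a\phi$ to the pair $(f_1,\dot f)$, with the boundary term at $p_0$ vanishing because $f_1(p_0)=\dot f(p_0)=0$, gives $\lambda^2a^3(0)\big(\dot f(0)f_1'(0)-f_1(0)\dot f'(0)\big)=-\int_{p_0}^0 af_1^2\,dp$. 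Inserting this, together with the identity $(g\lambda^4+\alpha\mu^2)f_1(0)=\lambda^4a^3(0)f_1'(0)$ (which is nothing but the hypothesis $W(\lambda,\mu)=0$), into the differentiated expression for $W_\mu$, I expect to obtain $f_1(0)W_\mu=2\alpha\mu f_1(0)^2-\lambda^2\int_{p_0}^0 af_1^2\,dp$.

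The difference on the right is not manifestly signed, and establishing its positivity is the heart of the argument. To handle it I would bring in the energy identity obtained by multiplying~\eqref{SYS1}$_1$ by $f_1$ and integrating by parts, namely $\lambda^2a^3(0)f_1(0)f_1'(0)-\lambda^2\int_{p_0}^0 a^3(f_1')^2\,dp=\mu\int_{p_0}^0 af_1^2\,dp$. Using the boundary relation $W=0$ once more to eliminate $a^3(0)f_1'(0)$, this expresses $\lambda^2\int_{p_0}^0 af_1^2\,dp$ through $f_1(0)^2$ and $\int_{p_0}^0 a^3(f_1')^2\,dp$, and I expect the previous formula to collapse to $\mu f_1(0)W_\mu=(\alpha\mu^2-g\lambda^4)f_1(0)^2+\lambda^4\int_{p_0}^0 a^3(f_1')^2\,dp$.

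Finally, since $f_1(p_0)=0$, the Cauchy--Schwarz inequality gives $f_1(0)^2=\big(\int_{p_0}^0 f_1'\,dp\big)^2\le\big(\int_{p_0}^0 a^{-3}\,dp\big)\int_{p_0}^0 a^3(f_1')^2\,dp$, whence $\mu f_1(0)W_\mu\ge\big[\alpha\mu^2+\lambda^4\big(\big(\int_{p_0}^0 a^{-3}\,dp\big)^{-1}-g\big)\big]f_1(0)^2$. The bracket is strictly positive precisely because assumption~\eqref{COND2} reads $g\int_{p_0}^0 a^{-3}\,dp<1$, and $f_1(0)>0$ by the monotonicity of $f_1$ established in the proof of Lemma~\ref{L:4}; this forces $W_\mu>0$, and with the Euler identity both~(i) and~(ii) follow. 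The main obstacle is thus the sign of $W_\mu$: the differentiation and Green identity alone produce only a sign-indefinite difference, and it is the energy identity combined with the sharp Cauchy--Schwarz (Poincaré) constant---calibrated exactly by~\eqref{COND2}---that resolves it.
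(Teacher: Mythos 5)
Your proof is correct, and it improves on the paper's route in one genuine respect. The paper derives the identity \eqref{wonder} the hard way: it differentiates $W$ in \emph{both} $\lambda$ and $\mu$, solves the two variational problems \eqref{SYS1l} and \eqref{SYS1m} for $f_{1,\lambda}$ and $f_{1,\mu}$, eliminates each via a Green's identity, and only then observes that the two resulting expressions \eqref{aform} and the analogous one for $W_\mu$ are proportional, which yields \eqref{wonder} and transfers the sign of $W_\lambda$ to $W_\mu$. Your observation that $f_1$ depends on $(\lambda,\mu)$ only through $\nu=\mu/\lambda^2$, so that $W(t\lambda,t^2\mu)=t^4W(\lambda,\mu)$ and Euler's relation $\lambda W_\lambda+2\mu W_\mu=4W$ gives \eqref{wonder} for free on the zero set, removes the entire $f_{1,\lambda}$ computation and reduces the lemma to the single sign $W_\mu>0$. (This homogeneity is also the structural reason behind the scaling law $\mu(\lambda)=C_0\lambda^2$ in \eqref{c0}, which the paper instead recovers a posteriori by integrating \eqref{wonder}.) For the remaining sign, your computation coincides with the paper's up to which variable is differentiated: the Green/Lagrange identity for $\partial_\mu f_1$ gives $f_1(0)W_\mu=2\alpha\mu f_1(0)^2-\lambda^2\int_{p_0}^0 af_1^2\,{\rm d}p$ (this matches the paper's displayed formula for $\mu f_1(0)W_\mu/\lambda^4$), the energy identity is \eqref{refer}, and the final Cauchy--Schwarz step calibrated by \eqref{COND2} is exactly \eqref{baba}; you also correctly invoke $f_1(0)>0$ from the monotonicity argument in Lemma~\ref{L:4}, which is legitimate since $\mu>0$ here. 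A minor stylistic difference: you use the hypothesis $W(\lambda,\mu)=0$ directly as the boundary relation $(g\lambda^4+\alpha\mu^2)f_1(0)=\lambda^4a^3(0)f_1'(0)$, whereas the paper routes this through the linear dependence $f_1=\Theta f_2$; the two are equivalent, and your version is the more economical.
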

\begin{proof}
If $W(\lambda,\mu)=0$, the solutions $f_1$ and $f_2$  to \eqref{SYS1} and \eqref{SYS2} are linearly dependent, hence  $f_1=\Theta f_2$ with $  \Theta\in\R.$
By Lemma~\ref{L:4},   $f_1 $   is positive in $(p_0,0]$. 
Recalling that also~${f_2(0)>0}$, it follows that actually $\Theta>0$. 

Given $(\lambda,\mu)\times\R\in(0,\infty)$, an application of the chain rule yields that
\[
W_\lambda(\lambda,\mu)=(g\lambda^4    + \alpha\mu^2)f_{1,\lambda}(0)-\lambda^4 a^3(0)f_{1,\lambda}'(0) +4\lambda^3g   f_1(0)-4\lambda^3a^3(0)f_1'(0),
\] 
where $f_{1,\lambda}\in{\rm C}^{2+\beta}([p_0,0])$ is the solution to
\begin{equation}\label{SYS1l}
\left.
\begin{array}{lll}
\lambda^2 (a^3 f_{1,\lambda}')' -\mu a f_{1,\lambda} = -2\lambda (a^3 f_1')'\qquad \text{in $[p_0,0]$},\\[1ex]
 f_{1,\lambda}(p_0)=0,\quad  f_{1,\lambda}'(p_0)=0.
\end{array}
\right\}
\end{equation}
We next multiply \eqref{SYS1l}$_1$ by $f_1$ and subtract from this relation the  identity \eqref{SYS1} multiplied with $f_{1,\lambda},$ to obtain, after integration on $[p_0,0],$ that 
\[
\lambda a^3(0)f_1(0)f_{1,\lambda}'(0)=\lambda a^3(0)f_1'(0)f_{1,\lambda}(0)-2 a^3(0)f_1(0)f_{1}'(0)+2\int_{p_0}^0a^3(p)f_1'^2(p)\,{\rm d}p.
\]
If $W(\lambda,\mu)=0$, the latter relation together with the identity $f_1=\Theta f_2$, $\Theta>0$, leads us to 
\begin{equation*}
\begin{aligned}
f_1(0)W_\lambda(\lambda,\mu)&=2\lambda^3\Big(2gf_1^2(0)-a^3(0)f_1(0)f_1'(0)-\int_{p_0}^0a^3(p)f_1'^2(p)\,{\rm d}p\Big)\\[1ex]
&=2\lambda^3\Big( a^3(0)f_1(0)f_1'(0)-\frac{2\alpha\mu^2}{\lambda^4}f_1(0)^2-\int_{p_0}^0a^3(p)f_1'^2(p)\,{\rm d}p\Big).
\end{aligned}
\end{equation*}
We may now  multiply~\eqref{SYS1}$_1$ by~$f_1$ and integrate over $[p_0,0] $ to obtain that 
\begin{equation}\label{refer}
a^3(0)f_1(0) f_1'(0)=\frac{\mu}{\lambda^2}\int_{p_0}^0  a(p)f_1^2(p)\,{\rm d}p+\int_{p_0}^0 a^3(p)f_1'^2(p) \,{\rm d}p,
\end{equation}
hence, on the one hand
\begin{equation}\label{aform}
f_1(0)W_\lambda(\lambda,\mu)=2\lambda^3\Big( \frac{\mu}{\lambda^2}\int_{p_0}^0  a(p)f_1^2(p)\,{\rm d}p-\frac{2\alpha\mu^2}{\lambda^4}f_1(0)^2\Big),
\end{equation}
and, on the other hand
\begin{equation}\label{aform2}
f_1(0)W_\lambda(\lambda,\mu) =2\lambda^3\Big(2gf_1^2(0)- 2\int_{p_0}^0 a^3(p)f_1'^2(p) \,{\rm d}p-\frac{\mu}{\lambda^2}\int_{p_0}^0  a(p)f_1^2(p)\,{\rm d}p\Big).
\end{equation}
H\"older's inequality  now yields 
 \begin{equation}\label{baba}
 gf_1^2(0)=g\Big(\int_{p_0}^0f_1'(p)\,{\rm d}p\Big)^2\leq g\Big(\int_{p_0}^0\frac{1}{a^3(p)}\,{\rm d}p\Big)\Big(\int_{p_0}^0a^3(p)f_1'^2(p)\,{\rm d}p\Big)
 \end{equation}
 and together with \eqref{aform2} and \eqref{COND2} we have
 \begin{equation*} 
\frac{f_1(0)W_\lambda(\lambda,\mu)}{ 4\lambda^3}<  \Big(g\int_{p_0}^0\frac{1}{a^3(p)}\,{\rm d}p-1\Big) \Big(\int_{p_0}^0a^3(p)f_1'^2(p)\,{\rm d}p\Big)<0,
\end{equation*}
which proves (i).

In order to prove (ii), we proceed similarly as above and compute, for  given~${\lambda,\,\mu\in(0,\infty)}$, that
\[
W_\mu(\lambda,\mu)=(2g\lambda^4    - \alpha\mu^2)f_{1,\mu}(0)-\lambda^4 a^3(0)f_{1,\mu}'(0) -2\alpha\mu f_1(0),
\] 
where $f_{1,\mu}\in{\rm C}^{2+\beta}([p_0,0])$ is the solution to
\begin{equation}\label{SYS1m}
\left.
\begin{array}{lll}
\lambda^2 (a^3 f_{1,\mu}')' -\mu a f_{1,\mu} = af_1\qquad \text{in $[p_0,0]$},\\[1ex]
 f_{1,\mu}(p_0)=0,\quad  f_{1,\mu}'(p_0)=0.
\end{array}
\right\}
\end{equation}
We next multiply \eqref{SYS1m}$_1$ by $f_1$ and subtract from this relation the  identity \eqref{SYS2} multiplied with $f_{1,\mu},$ to obtain, after integration on $[p_0,0],$ that 
\[
  a^3(0)f_1(0)f_{1,\mu}'(0)= a^3(0)f_1'(0)f_{1,\mu}(0)+\frac{1}{\lambda^2}\int_{p_0}^0a(p)f_1^2(p)\,{\rm d}p.
\]
Hence, if $W(\lambda,\mu)=0$, the latter identity combined with the relation $f_1=\Theta f_2$, $\Theta>0$, leads us to 
\begin{equation*}
\frac{\mu f_1(0)W_\mu(\lambda,\mu)}{\lambda^4}=-\Big( \frac{\mu}{\lambda^2}\int_{p_0}^0  a(p)f_1^2(p)\,{\rm d}p-\frac{2\alpha\mu^2}{\lambda^4}f_1(0)^2\Big),
\end{equation*}
and (ii) follows in view of \eqref{aform} and (i).
\end{proof}

Given $\lambda>0$, let 
\begin{equation}\label{defmu}
\mu(\lambda):=\inf\{\mu>0\,:\, W(\lambda,\mu)>0\}.
\end{equation}
Since $W$ is smooth, \eqref{Wl0} and Lemma~\ref{L:4} imply that $\mu(\lambda)$ is well-defined for all $\lambda>0$ and moreover~$\mu(\lambda)>0$.
In fact, Lemma~\ref{L:5} implies that, for each $\lambda>0,$ $\mu(\lambda)$ is the unique positive zero of the mapping $W(\lambda,\cdot)$. 
Moreover, the implicit function theorem together with Lemma~\ref{L:5} ensures that 
\[
[\mu\mapsto\lambda(\mu)]:(0,\infty)\to(0,\infty) 
\]
is smooth.
We now use the chain rule together with \eqref{wonder} to compute that
\[
\mu'(\lambda)=-\frac{W_\lambda(\lambda,\mu)}{W_\mu(\lambda,\mu)}=\frac{2\mu(\lambda)}{\lambda},\qquad\lambda>0,
\]
from where we infer that there exists a positive constant $C_0$ such that 
\begin{equation}\label{c0}
\mu(\lambda)=C_0\lambda^2,\qquad\lambda>0.
\end{equation}
It is remarkable that exactly the same expression for $\mu$ (with a possibly different constant~$C_0$) has been obtained in \cite{EKLM20} in the analysis of the bifurcation problem for stratified capillary-gravity waves.
We arrive at the following result.

 \begin{lemma}\label{L:6} Let 
 \begin{align}\label{lambda*}
  \lambda_*:=\frac{2\pi}{\sqrt{C_0}},
 \end{align}
 where $C_0$ is the constant identified in \eqref{c0}.
 Then, $\p_h\cF(\lambda_*,0)$ is a Fredholm operator of index zero and with a one-dimensional kernel spanned by
 \begin{align}\label{h*}
 h_*(q,p):=f_{1,*}(p)\cos(2\pi q), \qquad (q,p)\in\ov\0, 
 \end{align}
 where $f_{1,*} $ denotes the solution to \eqref{SYS1} corresponding to the parameters $(\lambda,\mu)=(\lambda_*,(2\pi)^2).$
 \end{lemma}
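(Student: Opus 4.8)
The plan is to show that the value $\lambda_*$ defined in \eqref{lambda*} is precisely the $\lambda$-value at which the smallest Sturm--Liouville parameter $\mu(\lambda)$ hits the critical value $(2\pi)^2$, and then to translate this into the Fredholm and kernel statements via Lemmas~\ref{L:2}, \ref{L:3}, and \ref{L:ib}. First I would observe that from \eqref{c0} we have $\mu(\lambda)=C_0\lambda^2$, so the equation $\mu(\lambda_*)=(2\pi)^2$ is equivalent to $C_0\lambda_*^2=(2\pi)^2$, which gives exactly $\lambda_*=2\pi/\sqrt{C_0}$. Thus for $\lambda=\lambda_*$ the Wronskian function satisfies $W(\lambda_*,(2\pi)^2)=0$, so by Lemma~\ref{L:3} the operator $R_{\lambda_*,(2\pi)^2}$ has a one-dimensional kernel spanned by $f_{1,*}$, the solution of \eqref{SYS1} for $(\lambda,\mu)=(\lambda_*,(2\pi)^2)$.

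Next I would assemble the kernel of $\p_h\cF(\lambda_*,0)$ using the Fourier decomposition from Lemma~\ref{L:ib}. By that lemma, $h\in\bY$ lies in the kernel iff $h_0=0$ and for every $k\geq1$ the Fourier coefficient $h_k$ defined in \eqref{hk} solves the problem \eqref{BVPk} with $\mu=(2k\pi)^2$. Since $\mu(\lambda)$ is, by the discussion following Lemma~\ref{L:5}, the \emph{unique} positive zero of $W(\lambda,\cdot)$, and since $\mu$ is strictly increasing in $k$ along the sequence $(2k\pi)^2$, problem \eqref{BVPk} has a nontrivial solution only when $(2k\pi)^2=\mu(\lambda_*)=(2\pi)^2$, i.e.\ only for $k=1$; for all $k\geq2$ we have $W(\lambda_*,(2k\pi)^2)>0$ by the definition \eqref{defmu} of $\mu(\lambda_*)$ together with $\mu(\lambda_*)=(2\pi)^2$, so the only solution of \eqref{BVPk} is the trivial one and hence $h_k=0$. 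This forces $h(q,p)=h_1(p)\cos(2\pi q)$ with $h_1$ a scalar multiple of $f_{1,*}$, which is precisely \eqref{h*}. That the kernel is therefore exactly one-dimensional and spanned by $h_*$ follows immediately.

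The Fredholm statement is the easier half: Lemma~\ref{L:2} already establishes that $\p_h\cF(\lambda,0)$ is a Fredholm operator of index zero for \emph{every} $\lambda>0$, so in particular for $\lambda=\lambda_*$; no further work is needed there. The main obstacle I anticipate is the monotonicity/uniqueness argument that rules out higher harmonics: one must be sure that the sign information $W(\lambda_*,\mu)>0$ for $\mu>(2\pi)^2$, guaranteed by $\mu(\lambda_*)$ being the unique zero of $W(\lambda_*,\cdot)$ and by $W_\mu>0$ at that zero (Lemma~\ref{L:5}(ii)), genuinely covers all the discrete values $(2k\pi)^2$ with $k\geq2$. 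Since $(2k\pi)^2>(2\pi)^2$ for $k\geq2$ and $W(\lambda_*,\cdot)$ has no zero beyond $\mu(\lambda_*)$, these values all lie in the region where $W$ is positive, so \eqref{BVPk} admits only $f=0$ there. Once this is secured the identification of the kernel with $\operatorname{span}\{h_*\}$ is automatic, and the lemma is proved.
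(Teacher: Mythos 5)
Your proposal is correct and follows essentially the same route as the paper, whose proof is a one-line appeal to the fact that $\mu(\lambda_*)=C_0\lambda_*^2=(2\pi)^2$ is the unique positive zero of $W(\lambda_*,\cdot)$ combined with Lemmas~\ref{L:2}, \ref{L:ib}, and \ref{L:3}; you have simply spelled out the Fourier-mode elimination for $k\geq 2$ that the paper leaves implicit.
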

 \begin{proof}
 Since   $\mu(\lambda_*)=(2\pi)^2$ is the unique positive zero of $W(\lambda_*,\cdot),$ see  \eqref{defmu}-\eqref{lambda*},  the claim follows from Lemma~\ref{L:2}, Lemma~\ref{L:ib}, and Lemma~\ref{L:3}.
 \end{proof}

 In order to apply  \cite[Theorem~1.7]{CR71} in the context of the  bifurcation problem \eqref{BifE}, it remains to prove that  the transversality condition
 \begin{equation}\label{TC}
 \p_{\lambda h}\cF(\lambda_*,0)[h_*]\not\in \ima \p_{h}\cF(\lambda_*,0),
 \end{equation}
  with $\lambda_*$ and $h_*$ introduced in \eqref{lambda*} and \eqref{h*}, is satisfied.
Therefore, we first characterize in Lemma~\ref{L:7} below the range of $  \p_{h}\cF(\lambda_*,0).$

 \begin{lemma}\label{L:7}
 A pair $(F,\varphi)\in \bZ_1\times\bZ_2$  belongs to $\ima\p_{h}\cF(\lambda_*,0)$ iff
 \begin{align}\label{IF0}
\int_\0 a^3h_*F\,{\rm d}(q,p)+\alpha C_0(1+C_0\lambda_*^2)\int_0^1 \varphi\tr_0h_*\,{\rm d}q=0,
 \end{align}
 where $h_*\in \ker\p_{h}\cF(\lambda_*,0)$ is defined in \eqref{h*}.
  \end{lemma}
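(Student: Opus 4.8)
The plan is to characterize the range of the Fredholm operator $\p_h\cF(\lambda_*,0) = (L,T)$ of index zero by identifying its cokernel, which is one-dimensional since the kernel is spanned by $h_*$ (Lemma~\ref{L:6}). The natural strategy is to find the unique (up to scaling) continuous linear functional on $\bZ_1\times\bZ_2$ that annihilates the image, and to show that the pairing in \eqref{IF0} defines exactly this functional. Concretely, I would argue that $(F,\varphi)\in\ima\p_h\cF(\lambda_*,0)$ iff the system $L[h]=F$, $T[h]=\varphi$ is solvable for some $h\in\bY$, and then decompose into Fourier modes using the functions $h_k(p)=\int_0^1 h(q,p)\cos(2k\pi q)\,{\rm d}q$ as in Lemma~\ref{L:ib}. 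For $k\neq 1$ the corresponding Sturm--Liouville operator $R_{\lambda_*,(2k\pi)^2}$ is invertible (since $\mu(\lambda_*)=(2\pi)^2$ is the \emph{unique} positive zero of $W(\lambda_*,\cdot)$, all modes $k\geq 2$ and the mode $k=0$ pose no obstruction), so the only solvability condition comes from the resonant mode $k=1$.

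The heart of the argument is thus a solvability (Fredholm alternative) analysis for the scalar boundary value problem obtained from the first Fourier mode. Writing $F_1(p):=\int_0^1 F(q,p)\cos(2\pi q)\,{\rm d}q$ and extracting the $\cos(2\pi q)$-component $\varphi_1$ of the boundary datum $\varphi$, the mode $h_1=f_{1,*}$-direction equation becomes an inhomogeneous version of \eqref{BVPk} with $\mu=(2\pi)^2$, namely $\lambda_*^2(a^3 f')'-(2\pi)^2 a f = (\text{source from }F_1)$ subject to the boundary relation coming from $T$ involving $\varphi_1$. Since the homogeneous problem has the nontrivial solution $f_{1,*}$, the inhomogeneous problem is solvable iff the source is orthogonal to $f_{1,*}$ in the appropriate sense. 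I would derive this orthogonality condition by the standard device: multiply the equation by $f_{1,*}$, integrate over $[p_0,0]$, integrate by parts twice to transfer derivatives onto $f_{1,*}$, and use that $f_{1,*}$ solves the homogeneous equation so that all interior terms cancel, leaving only boundary terms at $p=0$ (the boundary at $p=p_0$ contributes nothing since $f_{1,*}(p_0)=0$). The boundary terms at $p=0$ must then be combined with the relation encoding $T$ and the definition of $\varphi_1$; this is where the factor $\alpha C_0(1+C_0\lambda_*^2)$ should emerge from the nonlocal structure of $T$ in \eqref{derf} together with the identities $\mu(\lambda_*)=C_0\lambda_*^2=(2\pi)^2$.

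The main obstacle I anticipate is the careful bookkeeping of the nonlocal boundary operator $T$ when projected onto the first Fourier mode. Unlike a local Dirichlet or Robin condition, $T$ involves $(1-\p_q^2)^{-1}$ and the doubly-integrated operator $S[h]$, so I must first verify that on the $k$th Fourier mode the operator $(1-\p_q^2)^{-1}$ acts as multiplication by $(1+(2k\pi)^2)^{-1}$ and that the double integration in $S[h]$ contributes a factor $-(2k\pi)^{-2}$ to each mode. Tracking these factors for $k=1$, together with the $\lambda_*^4/\alpha$ prefactor, should convert the boundary relation from $T[h]=\varphi$ into precisely the boundary term that appears in \eqref{BVPk}, and the compatibility condition will then read as a linear combination of $\int_{p_0}^0 a^3 f_{1,*} F_1\,{\rm d}p$ and $\varphi_1(0)\,\tr_0 h_*(0)$-type quantities. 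Reassembling these mode-one quantities back into the full two-dimensional integrals $\int_\0 a^3 h_* F\,{\rm d}(q,p)$ and $\int_0^1\varphi\,\tr_0 h_*\,{\rm d}q$ (using orthogonality of the cosine modes, which introduces the customary factor of $1/2$ that gets absorbed into the overall normalization) yields \eqref{IF0}. The coefficient $\alpha C_0(1+C_0\lambda_*^2)$ is the precise fingerprint of this computation, and confirming it arises correctly — rather than some other constant — is the delicate step that validates the whole identification.
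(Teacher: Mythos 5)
Your proposal is correct, and the computation it outlines lands on exactly the right identity: the mode factors you flag as the delicate point do come out as you predict ($(1-\p_q^2)^{-1}$ acts on the $k$-th cosine mode as multiplication by $(1+(2k\pi)^2)^{-1}$, and the twice-integrated, mean-subtracted operator $S$ as multiplication by $-(2k\pi)^{-2}$), so with $(2\pi)^2=\mu(\lambda_*)=C_0\lambda_*^2$ the boundary relation from $T[h]=\varphi$ becomes $\lambda_*^4a^3(0)h_1'(0)-(g\lambda_*^4+\alpha(2\pi)^4)h_1(0)=-\alpha(2\pi)^2(1+(2\pi)^2)\varphi_1$ and the one-dimensional Fredholm alternative for the resonant mode yields precisely \eqref{IF0} with the coefficient $\alpha C_0(1+C_0\lambda_*^2)$. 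The paper reaches the same conclusion by a different organization: it pairs $L[h]=F$ with $a^3h_*$ over $\0$ and $T[h]=\varphi$ with $\tr_0 h_*$ over one period, integrates by parts using only the symmetry of $(1-\p_q^2)^{-1}$ and the boundary identity \eqref{desis} satisfied by $h_*$, and then eliminates the leftover terms by repeating the computation with $h$ and $h_*$ interchanged and using $\p_h\cF(\lambda_*,0)[h_*]=0$; since $h_*$ is a pure $\cos(2\pi q)$ mode, this is term-by-term the same calculation as your mode-one projection, so no explicit diagonalization of the nonlocal operator is needed. One caution on your sufficiency direction: you should not attempt to reconstruct $h$ by summing the mode-wise solutions of the operators $R_{\lambda_*,(2k\pi)^2}$, since that would require uniform-in-$k$ resolvent bounds in the H\"older scale; instead, as the paper does, observe that \eqref{IF0} defines a closed codimension-one subspace of $\bZ_1\times\bZ_2$ containing the range, and conclude equality from Lemma~\ref{L:2} (Fredholm of index zero) together with Lemma~\ref{L:6} (one-dimensional kernel). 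With that adjustment your argument is complete.
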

  \begin{proof}
  As a starting point we observe  that  for $\p_{h}\cF(\lambda_*,0)=(L,T)$ we have
  \[
L[h]=\lambda_* ^2 a^{-3}(a^3h_p)_p+a^{-2}h_{qq},\qquad h\in\bY.  
  \]
 We now assume that $(F,\varphi)\in \ima\p_{h}\cF(\lambda_*,0)$ is the image of a function $h\in\bY$.
 Multiplying the identity $L[h]=F$ by $a^3h_*$ and integrating the resulting relation by parts, we find that
 \begin{equation}\label{78a}
 \int_\0 a^3h_*F\,{\rm d}(q,p)=\int_0^1\lambda_*^2\tr_0(a^3h_ph_*)\,{\rm d}q-\int_\0 \big[\lambda_*^2 a^3 h_p h_{*,p}+ah_qh_{*,q}\big]\,{\rm d}(q,p).
 \end{equation}
 Moreover, after multiplying the relation $T[h]=\varphi$ by $\tr_0h_*$ and integrating the resulting relation by parts, we obtain,  in view of the symmetry of the operator $(1-\p_q^2)^{-1},$ that 
  \begin{equation*}
  \int_0^1\lambda_*^4\tr_0(a^3h_ph_*)\,{\rm d}q= \int_0^1(\alpha (2\pi)^4+g\lambda_*^4)\tr_0( hh_*)\,{\rm d}q-\alpha(2\pi)^2(1+(2\pi)^2)\int_0^1 \varphi\tr_0h_*\,{\rm d}q.
 \end{equation*}
In virtue of \eqref{c0}, \eqref{lambda*}, and \eqref{SYS1} we have 
\begin{equation}\label{desis}
(\alpha (2\pi)^4+g\lambda_*^4)\tr_0 h_*=\lambda^4_*\tr_0(a^3h_{*,p}) 
\end{equation}
and the latter identity can thus be recast as
\begin{equation}\label{78b}
  \int_0^1\lambda_*^2\tr_0(a^3h_ph_*)\,{\rm d}q=\int_0^1\lambda^2_*\tr_0(a^3h_{*,p}h)\,{\rm d}q-\alpha C_0(1+(2\pi)^2)\int_0^1 \varphi\tr_0h_*\,{\rm d}q,
 \end{equation}
 where $C_0$ is defined in \eqref{c0}.
 We now sum up \eqref{78a} and \eqref{78b} to conclude that 
 \begin{equation}\label{78c}
 \begin{aligned}
& \int_\0 a^3h_*F\,{\rm d}(q,p)+\alpha C_0(1+(2\pi)^2)\int_0^1 \varphi\tr_0h_*\,{\rm d}q\\[1ex]
 &\qquad=\int_0^1\lambda^2_*\tr_0(a^3h_{*,p}h)\,{\rm d}q-\int_\0 \big[\lambda_*^2 a^3 h_p h_{*,p}+ah_qh_{*,q}\big]\,{\rm d}(q,p).
 \end{aligned}
 \end{equation}
 Moreover, arguing as above, but interchanging the roles of $h$ and $h_*$, we arrive, in view of~${\p_{h}\cF(\lambda_*,0)[h_*]=0,}$ at
  \begin{equation*}
 \begin{aligned}
 &\int_0^1\lambda^2_*\tr_0(a^3h_{*,p}h)\,{\rm d}q-\int_\0 \big[\lambda_*^2 a^3 h_p h_{*,p}+ah_qh_{*,q}\big]\,{\rm d}(q,p)=0.
 \end{aligned}
 \end{equation*}
This relation together with \eqref{78c} immediately  implies \eqref{IF0}. 

Noticing that \eqref{IF0} defines a closed subspace of $\bZ_1\times\bZ_2$ of codimension 1 which contains the range of~${\p_{h}\cF(\lambda_*,0),}$ the desired claim follows now from Lemma~\ref{L:2}. 
  \end{proof}

We are now in a position to verify the transversality condition \eqref{TC}.

\begin{lemma}\label{L:8}
We have $\p_{\lambda h}\cF(\lambda_*,0)[h_*]\not\in \ima \p_{h}\cF(\lambda_*,0)$.
  \end{lemma}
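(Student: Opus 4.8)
The transversality condition \eqref{TC} asks that $\p_{\lambda h}\cF(\lambda_*,0)[h_*]$ fail to lie in $\ima\p_h\cF(\lambda_*,0)$. By Lemma~\ref{L:7}, the range is precisely the kernel of the bounded linear functional
\[
(F,\varphi)\mapsto\int_\0 a^3h_*F\,{\rm d}(q,p)+\alpha C_0(1+C_0\lambda_*^2)\int_0^1\varphi\tr_0h_*\,{\rm d}q,
\]
so the plan is to compute $\p_{\lambda h}\cF(\lambda_*,0)[h_*]=:(\wt F,\wt\varphi)$ explicitly and then show that the value of this functional on $(\wt F,\wt\varphi)$ is nonzero. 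Writing $h_*(q,p)=f_{1,*}(p)\cos(2\pi q)$, the first component $L$ of $\p_h\cF$ is the Sturm--Liouville operator $\lambda^2a^{-3}(a^3\p_p)\,\p_p+a^{-2}\p_q^2$ multiplied through, and the second component $T$ carries the factor $\lambda^4/\alpha$ in front of $S[h]$ together with the boundary relation \eqref{desis}. Differentiating each component once in $\lambda$ and evaluating at $(\lambda_*,0)$ acting on $h_*$ will produce $\wt F$ and $\wt\varphi$ as explicit expressions in $f_{1,*}$, $a$, and the constants.

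**Key steps, in order.** First I would differentiate $\cF_1$ in $\lambda$: since $L[h]=\lambda^2 a^{-3}(a^3h_p)_p+a^{-2}h_{qq}$ (the $\gamma$-term contributing, by $\gamma=-aa'$, to the self-adjoint form), the mixed derivative acting on $h_*$ yields $\wt F=2\lambda_* a^{-3}(a^3f_{1,*}')'\cos(2\pi q)=2\lambda_*^{-1}\mu a f_{1,*}\cos(2\pi q)/a$ after invoking \eqref{SYS1}, i.e.\ a multiple of $af_{1,*}\cos(2\pi q)$ with $\mu=(2\pi)^2$. Second, I would differentiate $T$ in $\lambda$; the dominant contribution comes from the explicit $\lambda^4$ prefactor, producing a boundary term proportional to $\tr_0(a^3f_{1,*}')$ and hence, via \eqref{desis}, proportional to $\tr_0h_*$. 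Third, I substitute both into the functional of Lemma~\ref{L:7}. The volume term becomes $2\lambda_*^{-1}\mu\int_\0 a^4 f_{1,*}^2\cos^2(2\pi q)\,{\rm d}(q,p)=\lambda_*^{-1}\mu\int_{p_0}^0 a^4f_{1,*}^2\,{\rm d}p$, which is \emph{strictly positive}, while the boundary term contributes a definite-sign multiple of $\int_0^1(\tr_0 h_*)^2\,{\rm d}q=\tfrac12 f_{1,*}^2(0)>0$. The crux is that both contributions carry the \emph{same} sign, so no cancellation can occur and the functional is nonzero.

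**The main obstacle and how to handle it.** The delicate point is establishing that the two contributions do not cancel, since each is only computed up to keeping track of signs and the $\lambda$-differentiation of the nonlocal operator $(1-\p_q^2)^{-1}$ and of $S[h]$ is algebraically heavy. The clean way to avoid cancellation is to exploit the relation \eqref{wonder}, $W_\lambda/W_\mu=-2\mu/\lambda$, together with the sign information from Lemma~\ref{L:5}: the same computation that produced $W_\lambda(\lambda_*,\mu)$ and $W_\mu(\lambda_*,\mu)$ should reassemble, after multiplication by $f_{1,*}(0)$, into exactly the functional of Lemma~\ref{L:7} evaluated at $(\wt F,\wt\varphi)$. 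Concretely, I expect the value of the functional to equal $c\,f_{1,*}(0)\,W_\mu(\lambda_*,(2\pi)^2)$ for an explicit positive constant $c$ (or an analogous multiple involving $W_\lambda$), and since $W_\mu(\lambda_*,(2\pi)^2)>0$ by Lemma~\ref{L:5}(ii) while $f_{1,*}(0)>0$ by the monotonicity of $f_{1,*}$ established in the proof of Lemma~\ref{L:4}, the functional is strictly positive. This identifies transversality with the nonvanishing of the Wronskian's $\mu$-derivative at the simple zero $\mu(\lambda_*)=(2\pi)^2$, which is precisely the content of Lemma~\ref{L:5} and completes the verification of \eqref{TC}.
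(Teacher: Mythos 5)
Your overall strategy is the same as the paper's: compute $\p_{\lambda h}\cF(\lambda_*,0)[h_*]$ componentwise and evaluate on it the codimension-one functional of Lemma~\ref{L:7}. However, the claim on which your main argument rests --- that the interior and boundary contributions ``carry the same sign, so no cancellation can occur'' --- is false, and this is precisely where the real difficulty of the lemma lies. Using \eqref{refer} (equivalently, your substitution of \eqref{SYS1} into the interior term; note the integrand there is $a f_{1,*}^2$, not $a^4 f_{1,*}^2$), the interior contribution equals $\tfrac{(2\pi)^2}{\lambda_*}\int_{p_0}^0 a f_{1,*}^2\,{\rm d}p>0$, whereas the boundary contribution works out to $-2\alpha\lambda_* C_0^2 f_{1,*}^2(0)<0$. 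The two terms compete, and their sum is
\[
\lambda_*\Big((g-\alpha C_0^2)f_{1,*}^2(0)-\int_{p_0}^0 a^3 f_{1,*}'^2(p)\,{\rm d}p\Big),
\]
whose strict negativity is not automatic: the paper obtains it by discarding $-\alpha C_0^2 f_{1,*}^2(0)$ and then invoking H\"older's inequality \eqref{baba} together with the standing bifurcation condition \eqref{COND2}. Your argument never uses \eqref{COND2} at this step, so it cannot be complete; moreover, your conclusion that the functional is \emph{strictly positive} has the wrong sign.

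Your fallback idea is sound and essentially recovers the paper's computation by another route: the sum above equals $\tfrac{1}{2\lambda_*^2}\,f_{1,*}(0)\,W_\lambda(\lambda_*,(2\pi)^2)$ (compare with \eqref{aform}), so Lemma~\ref{L:5}~(i) gives nonvanishing --- and, again, negativity. But you only state this as an expectation, your primary guess is a positive multiple of $W_\mu$ (which would produce the opposite, incorrect, sign), and you do not carry out the identification. To turn the proposal into a proof you must either verify the identity with $W_\lambda$ explicitly or reproduce the H\"older/\eqref{COND2} estimate; either way the hypothesis \eqref{COND2} enters essentially, which the ``same sign'' shortcut conceals.
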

  \begin{proof} 
  Recalling \eqref{derf}, we have
   \begin{equation*} 
 \begin{aligned}
 \p_{\lambda h}\cF_1(\lambda_*,0)[h_*]&:=2\lambda_* a^{-3}(a^3h_{*,p})_p,\\
  \p_{\lambda h}\cF_2(\lambda_*,0)[h_*]&:= \frac{4\lambda_*^3}{\alpha}(1-\p_q^2)^{-1}\Big[ S [h_*]-\int_0^1S[h_*]\, {\rm d} q\Big],
 \end{aligned}
 \end{equation*}
  where, using the definition of the operator $S$ and that of $h_*$ together with \eqref{desis}, we have
\[
S [h_*]-\int_0^1S[h_*]\, {\rm d} q=-\frac{1}{(2\pi)^2}\tr_0(a^3h_{*,p}-gh_*)=-\frac{\alpha C_0^2}{(2\pi)^2}\tr_0 h_*.
\]
Appealing to \eqref{refer}, we compute
\begin{align*}
\int_\0 a^3h_* \p_{\lambda h}\cF_1(\lambda_*,0)[h_*]\,{\rm d}(q,p)&=\lambda_*\Big(a^3(0)f_{1,*}(0)f_{1,*}'(0)-\int_{p_0}^0 a^3(p)f_{1,*}'^2(p)\,{\rm d}p\Big)\\[1ex]
&=\lambda_*\Big((g+\alpha C_0^2)f_{1,*}^2(0) -\int_{p_0}^0 a^3(p)f_{1,*}'^2(p)\,{\rm d}p\Big)
\end{align*}
and
\begin{align*}
\alpha C_0(1+C_0\lambda_*^2)\int_0^1 \p_{\lambda h}\cF_2(\lambda_*,0)[h_*]\tr_0h_*\,{\rm d}q&=2\lambda_*f_{1,*}(0)(a^3(0)f_{1,*}'(0)-2gf_{1,*}(0))\\[1ex]
&=-2\alpha\lambda_* C_0^2f_{1,*}^2(0),
 \end{align*}
hence, using also \eqref{baba} and \eqref{COND2}, we get
 \begin{align*}
&\int_\0 a^3h_* \p_{\lambda h}\cF_1(\lambda_*,0)[h_*]\,{\rm d}(q,p)+\alpha C_0(1+C_0\lambda_*^2)\int_0^1 \p_{\lambda h}\cF_2(\lambda_*,0)[h_*]\tr_0h_*\,{\rm d}q\\[1ex]
&\qquad=\lambda_*\Big((g-\alpha C_0^2)f_{1,*}^2(0) -\int_{p_0}^0 a^3(p)f_{1,*}'^2(p)\,{\rm d}p\Big)\\[1ex]
&\qquad<\lambda_*\Big(gf_{1,*}^2(0) -\int_{p_0}^0 a^3(p)f_{1,*}'^2(p)\,{\rm d}p\Big)\\[1ex]
&\qquad\leq\lambda_*\Big( g \int_{p_0}^0\frac{1}{a^3(p)}\,{\rm d}p-1\Big)\int_{p_0}^0 a^3(p)f_{1,*}'^2(p)\,{\rm d}p<0.
 \end{align*}
This proves the claim.
  \end{proof}

We are now in a position to establish Theorem~\ref{MT1}.

\begin{proof}[Proof of Theorem~\ref{MT1}]
In view of Lemma~\ref{L:1} and of Proposition~\ref{P:Ep}, in the framework of waves which are symmetric with respect to the vertical line $x=0$, 
the Euler formulation~\eqref{For1} of the steady   hydroelastic waves problem 
is equivalent to the height function formulation~\eqref{PBF}, hence also to the bifurcation problem \eqref{BifE}.
Therefore, the assertion (i) is a straightforward consequence of Lemma~\ref{L:0}.

Concerning (iia), if \eqref{COND2Thm} is not satisfied, then $W(\lambda,0)\geq0$ for all $\lambda>0,$ see \eqref{wloo}, and Lemma~\ref{L:4} and Lemma~\ref{L:5}~(ii) then ensure that $W(\lambda,\mu)>0$ for all $\mu>0$.
Lemma~\ref{L:2}, Lemma~\ref{L:ib}, and Lemma~\ref{L:3} then imply  that $\p_h\cF(\lambda,0) $ is an isomorphism, hence $(\lambda,0)$ is not a bifurcation point for \eqref{BifE}, regardless of the value of $\lambda>0$.

It remains to establish (iib).
Therefore we note that \eqref{wloo}, \eqref{COND2}, Lemma~\ref{L:4}, and Lemma~\ref{L:5}~(ii) imply that for each $\lambda>0$, the nonlinear equation $W(\lambda,\cdot)=0$ has a 
unique solution $\mu=\mu(\lambda)>0$, which is given by \eqref{c0}.
Our previous requirements  that the boundary value problem~\eqref{BVPk} has a nontrivial solution for~$\mu=(2\pi)^2$ and only the zero solution for $\mu>(2 \pi)^2$ identifies a unique value $\lambda_*$, see~\eqref{lambda*}, with this property.
The smoothness property \eqref{refF} together with Lemma~\ref{L:6} and Lemma~\ref{L:8} enable us now to use the local bifurcation theorem due to Crandall and Rabinowitz, 
cf.  \cite[Theorem~1.7]{CR71}, in the context of \eqref{BifE},  to conclude, in view of the equivalence of the formulations \eqref{For1} and~\eqref{BifE}, 
the existence of the smooth local  bifurcation curve 
   \begin{align}\label{BCu1}
      [s\mapsto (\lambda(s),h(s))]:(-\e,\e)\to (0,\infty)\times \cO,
   \end{align}
where  $\e>0$  is  small,  such that $\mathcal{F}(\lambda(s),h(s))=0$ for all $|s|<\e$.
 Moreover, $\lambda(0)=\lambda_*$ and 
  \begin{align}\label{AAS0}
  h(s)=s(h_*+\chi(s)) 
  \end{align}
  where $\chi\in{\rm  C}^\infty((-\e,\e),\bY)$ satisfies $\chi(0)=0$.
Arguing similarly  as in \cite[Section 5]{CoSt04}, it is not difficult to prove that, since the function $h_*$ satisfies~${h_*(q,0)=f_{1,*}(0)\cos(2\pi q),\, q\in\R}$, see~\eqref{h*}, with $f_{1,*}(0)>0$, also the 
 waves profile  $ \eta(s)$ has for~${s\neq0}$ exactly one  maximum (at $x=0$) and  minimum (at $x=\lambda(s)/2$) per period. This completes the proof.
\end{proof}

\bibliographystyle{siam}
\bibliography{Literature}
\end{document}